\newtheorem{theorem}{Theorem}[section]
\newtheorem{lem}[theorem]{Lemma}
\newtheorem{prop}[theorem]{Proposition}
\newtheorem{cor}[theorem]{Corollary}
\newtheorem{conj}[theorem]{Conjecture}
\theoremstyle{definition}
\newtheorem{definition}[theorem]{Definition}
\newtheorem{condition}[theorem]{Condition}
\theoremstyle{remark}
\numberwithin{equation}{section}
\newcommand{\ad}{\text{ad}}
\newcommand{\Hor}{{\mathcal{H}}}
\newcommand{\V}{{\mathcal{V}}}
\newcommand{\ra}{\rightarrow}
\newcommand{\CP}{{\mathbb{CP}}}
\newcommand{\Ad}{\text{Ad}}
\newcommand{\ddt}{\frac{\partial}{\partial t}}
\newcommand{\lb}{\langle}
\newcommand{\rb}{\rangle}
\newcommand{\mg}{\mathfrak{g}}
\newcommand{\mm}{\mathfrak{m}}
\newcommand{\mU}{\mathfrak{U}}
\newcommand{\mV}{\mathcal{V}}
\newcommand{\mH}{\mathcal{H}}
\newcommand{\ml}{\mathfrak{l}}
\newcommand{\mA}{\mathcal{A}}
\newcommand{\bs}{\backslash}
\newcommand{\R}{\mathbb{R}}
\begin{document}

\title[Totally Geodesic Foliations of Lie Groups]{Totally Geodesic Foliations and Doubly Ruled Surfaces in a  Compact Lie Group}
\author{Marius Munteanu, Kristopher Tapp}

\begin{abstract}
For a Riemannian submersion from a simple compact Lie group with a bi-invariant metric, we prove the action of its holonomy group on the fibers is transitive.  As a step towards classifying Riemannian submersions with totally geodesic fibers, we consider the parameterized surface induced by lifting a base geodesic to points along a geodesic in a fiber.  Such a surface is ``doubly ruled'' (it is ruled by horizontal geodesics and also by vertical geodesics).  Its characterizing  properties allow us to define ``doubly ruled parameterized surfaces'' in any Riemannian manifold, independent of Riemannian submersions.  We initiate a study of the doubly ruled parameterized surfaces in compact Lie groups and in other symmetric spaces by establishing several rigidity theorems and by providing several examples with unexpected properties.\end{abstract}

\address{Department of Mathematics\\SUNY Oneonta, Oneonta, NY 13820}
\email{munteam@oneonta.edu}
\address{Department of Mathematics\\ Saint Joseph's University\\
5600 City Ave.\\
Philadelphia, PA 19131}
\email{ktapp@sju.edu}

\maketitle
\date{\today}


\section{Introduction}
Riemannian submersions are a central tool for many geometric constructions such as nonnegatively curved manifolds and Einstein manifolds, but they are also important for proving rigidity theorems.  For example, the proof of the Diameter Rigidity Theorem for positively curved manifolds~\cite{GG1} hinged on the classification of Riemannian submersions from round spheres (achieved in~\cite{GG2},\cite{Wi}).  This theorem motivated work towards classifying Riemannian submersions from other symmetric spaces.  For example, the Riemannian submersions from flat Euclidean spaces were classified in~\cite{GW}.  Those from $\CP^n$ with totally geodesic fibers had been previously classified in~\cite{Es2}.

K. Grove posed the problem of classifying Riemannian submersions from compact Lie groups with bi-invariant metrics~\cite[Problem 5.4]{Gro}.  The special case of one-dimensional fibers was solved in~\cite{M}.  The general case is likely to be difficult, especially in light of recent examples of infinite families which are not biquotient submersions~\cite{KS}.  We at least establish the following rigidity:
\begin{prop}\label{hol}
If $G$ is a simple compact Lie group with bi-invariant metric, and $\pi:G\ra B$ is a Riemannian submersion, then the holonomy group of $\pi$ acts transitively on the fibers.
\end{prop}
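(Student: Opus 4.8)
The plan is to recast holonomy in terms of the O'Neill integrability tensor $A$ and to show that the resulting \emph{holonomy distribution} exhausts the vertical distribution $\V$, so that holonomy orbits are open, hence (by connectedness) all of the fibers. Viewing $\Hor$ as a connection on $\pi$, its curvature is $A$, and the Ambrose--Singer theorem identifies the tangent space at $p$ to the holonomy orbit through $p$ with the subspace $\Phi_p\subseteq\V_p$ spanned by the values $A_XY$ together with their iterated horizontal covariant derivatives (for $X,Y$ horizontal). Thus transitivity is equivalent to $\Phi=\V$, and I would argue by contradiction, fixing the fiber through the identity $e$, writing its vertical space $\V_e$ as a subspace $\mathfrak v\subseteq\mg$ via left translation, and assuming $\mathfrak w:=\Phi_e^\perp\cap\mathfrak v\neq 0$.

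First I would record that $\hat{\mathcal D}:=\Hor\oplus\Phi$ is integrable, its leaves being the holonomy saturations; because a horizontal geodesic is the unique horizontal lift of its projection, every horizontal geodesic issuing from a leaf stays in that leaf. Hence the leaf $L$ through $e$ is a \emph{proper} submanifold (as $\Phi\neq\V$) containing all horizontal geodesics through its points. Here the bi-invariant structure enters decisively: geodesics of $G$ are translates of one-parameter subgroups, so $L$ is filled out by such subgroup-arcs in the horizontal directions, and $\pi|_L\colon L\ra B$ is itself a Riemannian submersion whose fibers are the holonomy orbits. The aim of this step is an algebraic hold on $T_eL=\mh\oplus\Phi_e$ (with $\mh=\mathfrak v^\perp$) and on its orthogonal complement $\mathfrak w\subseteq\mathfrak v$.

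The crux is to convert $\mathfrak w\perp\Phi$ into a statement contradicting the simplicity of $\mg$. Using the bi-invariant Levi--Civita connection $\nabla_XY=\tfrac12[X,Y]$ on left-invariant fields, I would expand $A_XY|_e=\tfrac12\,\V[X,Y]+\V D$, where $D$ is the left-trivialized derivative of a horizontal extension of $Y$ along $X$ and encodes the twisting of $\Hor$. For $w\in\mathfrak w$ the vanishing of $\langle A_XY,w\rangle$, and of the analogous pairings against all covariant derivatives of $A$, should---via the $\ad$-invariance identity $\langle[X,Y],w\rangle=\langle X,[Y,w]\rangle$---force $[\mh,\mathfrak w]\subseteq\mathfrak v$ and, propagating through the higher-order terms, build a nonzero $\ad(\mg)$-invariant subspace contained in the proper subspace $\mathfrak v$, i.e.\ a proper ideal of $\mg$, the desired contradiction. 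I expect the main obstacle to be exactly the correction $\V D$: since $\Hor$ and $\V$ are not left-invariant, $A$ is not merely the bracket, and controlling $D$---presumably by differentiating the horizontality condition $\langle\dot\gamma,\V\rangle\equiv 0$ along the horizontal subgroup-geodesics produced in the previous step---is where the real work lies.
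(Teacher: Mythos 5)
Your proposal contains a genuine gap, and it is exactly where you yourself locate ``the real work.'' The reduction to showing $\Phi=\V$ and the contradiction setup are serviceable (with one caveat: the Ambrose--Singer-type identification of the orbit tangent space with the span of $A$-values and their iterated derivatives holds in the analytic category, while here the submersion is only smooth, so Sussmann's orbit theorem gives only the containment $\Phi_p\subseteq T_p(\tilde H\star p)$; fortunately that containment is the direction your contradiction needs, but your claim that transitivity is \emph{equivalent} to $\Phi=\V$ is not justified as stated). The fatal problem is the crux step: since $\Hor$ is an arbitrary smooth distribution with no left- or right-invariance, the correction term $\V D$ in $A_XY|_e=\tfrac12\V[X,Y]+\V D$, and a fortiori the iterated horizontal covariant derivatives of $A$, admit no algebraic expression in terms of $\mg$. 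The orthogonality relations $\lb A_XY,w\rb=0$ together with $\ad$-invariance of the metric therefore do not propagate to $[\mh,\mathfrak w]\subseteq\mathfrak v$, let alone to an $\ad(\mg)$-invariant subspace inside $\mathfrak v$; no purely first-order or formal manipulation of the twisting term is known to produce the required ideal, and your sketch offers no mechanism for it. As written, the proposal is an outline whose essential step is missing.

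The missing idea is a curvature input, and this is how the paper's proof works. By Wilking's dual-foliation results (Theorem 3b and Proposition 6.1 of the cited paper of Wilking), in nonnegative curvature any vector $V$ orthogonal to the holonomy orbit (dual leaf) spans a zero-curvature plane with \emph{every} horizontal vector $X$, and parallel translates of $V$ along horizontal curves stay orthogonal to the holonomy orbits. In a bi-invariant metric, zero curvature means $[X,V]=0$, so the parallel field $V(t)$ along a horizontal geodesic $\alpha(t)=\exp(tX)$ lies in a totally geodesic flat and is simultaneously left- and right-invariant; hence $\Ad_{\alpha(t)}V=V$, and a piecewise-geodesic continuation (using Wilking's orthogonality statement at the break points) gives $\Ad_gV=V$ for all $g\in\tilde H\star e$. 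Consequently $[Z,V]=0$ for all $Z$ tangent to the orbit, so with $\mm\subset\V_e$ the space orthogonal to the orbit, $\mg=\mm\oplus\mm^\perp$ is a decomposition into commuting subspaces; $\ad$-invariance of the metric then makes both summands ideals, contradicting simplicity. Note that this argument converts orthogonality to the orbit directly into bracket relations via zero-curvature planes, which is precisely the algebraic control your approach lacks; I recommend replacing the $A$-tensor computation by this curvature argument.
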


Another known rigidity phenomenon in this context is that the O'Neill tensor vanishes on horizontal zero-curvature planes~\cite{flats}.  Grove's question appears more tractable under the added assumption that the fibers are totally geodesic, in which case one expects:
\begin{conj} If $G$ is a simple compact Lie group with bi-invariant metric, and $\pi:G\ra B$ is a Riemannian submersion with connected totally geodesic fibers, then it is a left or right coset fibration; that is, $B=G/H$ or $B=H\bs G$ for some subgroup $H\subset G$, and $\pi$ is the quotient map.
\end{conj}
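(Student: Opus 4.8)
The plan is to carry out the doubly-ruled-surface program sketched in the abstract, with Proposition~\ref{hol} as the essential input. Since the fibers are totally geodesic the O'Neill tensor $T$ vanishes, and by Hermann's theorem $\pi$ is a fiber bundle whose structure group lies inside the holonomy group; in particular all fibers are mutually isometric and the remaining structure of $\pi$ is encoded in the O'Neill tensor $A$. Fix $e\in G$, and let $\mathfrak{v}=\mV_e\subset\mg$ and $\mh=\mH_e=\mathfrak{v}^\perp$ be the vertical and horizontal subspaces. The fiber $F_e$ through $e$ is a connected, complete, totally geodesic submanifold of the symmetric space $G$, and is therefore of the form $\exp(\mathfrak{v})$ for a Lie triple system $\mathfrak{v}$ (that is, $[[\mathfrak{v},\mathfrak{v}],\mathfrak{v}]\subseteq\mathfrak{v}$). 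The conjecture then reduces to showing that $\mathfrak{v}$ is in fact a \emph{subalgebra}, $[\mathfrak{v},\mathfrak{v}]\subseteq\mathfrak{v}$: for then $H=\exp(\mathfrak{v})$ is a closed connected subgroup, and one checks that the fibers are precisely its left (or right) cosets, so that $\pi$ is the associated quotient map.

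To produce the needed bracket relations I would exploit the doubly-ruled surfaces directly. Given a geodesic $\gamma$ in $B$ and a geodesic $\sigma$ lying in a fiber, lifting $\gamma$ horizontally from each point of $\sigma$ produces a parameterized surface $f(s,t)$ in $G$ that is ruled by the horizontal lifts $s\mapsto f(s,t)$ and, because the fibers are totally geodesic, also by the vertical geodesics $t\mapsto f(s,t)$; both rulings are genuine geodesics of $G$. In the bi-invariant metric every geodesic is a translate of a one-parameter subgroup, the Levi-Civita connection satisfies $\nabla_XY=\tfrac12[X,Y]$ on left-invariant fields, and orthonormal planes have sectional curvature $\tfrac14\|[X,Y]\|^2$; consequently the Jacobi and Gauss equations along the two rulings convert the doubly-ruled condition into explicit algebraic identities relating, through the bracket, the horizontal ruling direction (in $\mh$) and the vertical ruling direction (in $\mathfrak{v}$). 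Here Proposition~\ref{hol} is decisive: transitivity of the holonomy action lets me realize \emph{every} vertical direction at \emph{every} point of a fiber as the ruling direction of some such surface, so that the pointwise identities accumulate into global constraints on $[\mathfrak{v},\mh]$ and, after a further bracketing, on $[\mathfrak{v},\mathfrak{v}]$.

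Assembling these constraints, the goal is to upgrade the Lie-triple-system relation to the subalgebra relation $[\mathfrak{v},\mathfrak{v}]\subseteq\mathfrak{v}$, after which the coset description follows and simplicity of $G$ is invoked to rule out degenerate or mixed left/right behaviour. This upgrade is where I expect the real difficulty to lie. The examples advertised in the abstract indicate that the doubly-ruled condition is not by itself rigid enough to force $A$ into coset form pointwise, so no purely local computation can close the argument; instead one must combine the transitivity of holonomy with the absence of proper ideals in $\mg$ (simplicity) to exclude precisely those $A$-configurations that are locally compatible with doubly-ruled surfaces yet globally incompatible with a coset fibration. I anticipate that this global rigidity step---promoting the infinitesimal triple-system data to an honest subalgebra via simplicity---is the principal obstacle, whereas the concluding verification that the cosets of $H=\exp(\mathfrak{v})$ are the fibers should be routine.
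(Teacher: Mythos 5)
You have not proved this statement, and neither does the paper: it is stated there as a \emph{conjecture}, explicitly described as likely difficult, with only special cases known (the one-dimensional-fiber case in~\cite{M}, and Ranjan's result~\cite{Ranjan} under the added hypothesis that the fiber through the identity contains a maximal torus). Your text is accordingly a research program rather than a proof, and you yourself flag the decisive step --- upgrading the Lie-triple-system relation $[[\mathfrak{v},\mathfrak{v}],\mathfrak{v}]\subseteq\mathfrak{v}$ to $[\mathfrak{v},\mathfrak{v}]\subseteq\mathfrak{v}$ and excluding non-coset $A$-configurations --- as an anticipated obstacle without supplying any argument for it. The paper's own results show that the ingredients you list cannot close this gap: by Proposition~\ref{P:g}, \emph{every} pair of a horizontal $X$ and vertical $V$ already yields a doubly ruled surface, so transitivity of holonomy (Proposition~\ref{hol}) adds nothing at the level of the pointwise identities; and Theorem~\ref{classify} together with the explicit examples in the paper (the biquotient-type good triples with $\mA=-\frac12[X,V_\alpha+V_\beta]\neq\pm\frac12[X,V]$, and the $SU(4)$ example with $X$ not weakly regular) shows that the doubly-ruled algebraic identities admit solutions in \emph{simple} groups that are not of coset form. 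Hence ``the pointwise identities accumulate into global constraints'' is exactly the unproved content of the conjecture, and simplicity of $\mg$ alone is not known to rule out the rogue configurations.

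A secondary but genuine error: your reduction ``$\mathfrak{v}$ a subalgebra $\Rightarrow$ coset fibration'' is false as stated. Knowing that the fiber through $e$ is the subgroup $H=\exp(\mathfrak{v})$ constrains only one fiber; by Proposition~\ref{two} the full submersion is determined by the vertical space \emph{together with the $A$-tensor} at a single point, and that is why the paper's coset criterion (its corollary to Proposition~\ref{two}) takes as hypothesis $A(X,V)=\pm\frac12[X,V]$ at $e$ --- from which $\mathfrak{v}$ being a subalgebra is \emph{deduced}, not the other way around. So even granting your bracket relations on $\mathfrak{v}$, you would still need to show that the $A$-tensor at $e$ has coset form before the concluding verification could be ``routine,'' and that is precisely what remains open. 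In short: the statement has no proof in the paper, your proposal does not supply one, and the bridge you assert from Lie triple system to coset fibration does not hold without the $A$-tensor rigidity you leave unresolved.
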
\label{cc}
If $G$ is not simple, then the conjecture is still reasonable, but must be reworded to allow for submersions such as $\pi:G_1\times G_2\ra (H_1\bs G_1)\times(G_2/H_2)$ (where $H_i\subset G_i$), which have totally geodesic fibers, but are only equivariantly isometric to (not literally equal to) coset fibrations.  The conjecture was proven in~\cite{Ranjan}  under the added hypothesis that the fiber through the identity contains a maximal torus of $G$.

To classify Riemmanian submersions with totally geodesic fibers from any space $M$, we propose that it is useful to first understand the ``good triples'' of vectors, defined as follows:
\begin{definition}\label{D:good} Let $M$ be a Riemannian manifold and let $p\in M$. We refer to a triple $\{X,V,\mA\}\subset T_pM$ as \emph{good} if $\exp(t\cdot V(s)) = \exp(s\cdot X(t))$ for all $s,t\in\R$, where $V(s)$ denotes the Jacobi field along $s\mapsto\exp(sX)$ with $V(0)=V$ and $V'(0)=\mA$, and $X(t)$ denotes the Jacobi field along $t\mapsto\exp(tV)$ with $X(0)=X$ and $X'(0)=\mA$.  In this case, the parameterized surface $f(s,t)=\exp(t\cdot V(s)) = \exp(s\cdot X(t))$ will be called a \emph{doubly ruled parameterized surface}.
\end{definition}


A ``doubly ruled surface'' in $M$ means a surface which admits a transversal pair of smooth foliations by geodesics of $M$.  A ``doubly ruled parameterized surface'' is thus a special type of doubly ruled surface; namely, one whose geodesic foliations arise as the constant-$s$ and constant-$t$ curves of some single parameterization $(s,t)\mapsto f(s,t)$.  Since this added constraint appears restrictive, one might not expect many to exist in symmetric spaces.  We will exhibit an unexpected abundance of them in compact Lie groups, including examples beyond those which arise from coset fibrations via the following relationship between good triples and the submersion problem:

\begin{prop}\label{P:g} If $\pi:M\ra B$ is a Riemannian submersion with totally geodesic fibers, $p\in M$, and $X,V\in T_pM$ with $V$ vertical and $X$ horizontal, then $\{X,V,A(X,V)\}$ is a good triple (where $A$ denotes the O'Neill tensor).
\end{prop}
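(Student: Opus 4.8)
The plan is to exhibit the candidate surface explicitly as a family of horizontal lifts of a single base geodesic, and then to verify the two defining properties of Definition \ref{D:good} one Jacobi field at a time. Set $\mA:=A(X,V)$, which is horizontal. Write $\gamma(s)=\exp(sX)$; since $X$ is horizontal, $\gamma$ is a horizontal geodesic and $\bga:=\pi\comp\gamma$ is a geodesic of $B$. Write $\alpha(t)=\exp(tV)$; since $V$ is vertical and the fiber $F$ through $p$ is totally geodesic, $\alpha$ is a geodesic of $M$ lying in $F$. Assuming $M$ is complete (so that horizontal lifts exist for all parameter values), define $f(s,t)$ to be the value at parameter $s$ of the horizontal lift of $\bga$ starting at $\alpha(t)$; equivalently $f(s,t)=h_s(\exp(tV))$, where $h_s$ denotes the holonomy diffeomorphism carrying $F$ to the fiber over $\bga(s)$ along $\bga|_{[0,s]}$. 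By construction $f(s,0)=\gamma(s)$, $f(0,t)=\alpha(t)$, and each curve $s\mapsto f(s,t)$ is a horizontal geodesic.

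The key step is to show that the transverse curves $t\mapsto f(s,t)$ are also geodesics. Here I would invoke Hermann's theorem: because the fibers are totally geodesic, each holonomy map $h_s$ is an isometry between fibers. Therefore $t\mapsto h_s(\alpha(t))$ is a geodesic of the totally geodesic fiber over $\bga(s)$, hence a geodesic of $M$. This is the only place where the totally-geodesic hypothesis is genuinely used, and it is where the argument would fail for arbitrary fibers; I expect this to be the main obstacle to make fully rigorous, together with the smooth dependence of $h_s$ on $s$ and the completeness needed for global lifts.

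With both rulings known to be geodesic, the remaining work is to match initial conditions. As $t$ varies, $s\mapsto f(s,t)$ is a geodesic variation of $\gamma$, so $J(s):=\partial_t f(s,t)\big|_{t=0}$ is a Jacobi field along $\gamma$ with $J(0)=\alpha'(0)=V$. Using the symmetry $\frac{D}{\partial s}\partial_t f=\frac{D}{\partial t}\partial_s f$ at $(0,0)$, together with the fact that $t\mapsto\partial_s f(0,t)$ is the restriction to $\alpha$ of the basic horizontal field $\widetilde X$ lifting $\pi_*X$, one computes $J'(0)=\nabla_V\widetilde X$. Because the fibers are totally geodesic, the second fundamental form vanishes and so the vertical part of $\nabla_V\widetilde X$ is zero; because $\widetilde X$ is basic, $[\widetilde X,V]$ is vertical, whence $\mathcal{H}\nabla_V\widetilde X=\mathcal{H}\nabla_{\widetilde X}V=A_X V=\mA$. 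Thus $J'(0)=\mA$ and $J(s)=V(s)$. Since $t\mapsto f(s,t)$ is a geodesic with $f(s,0)=\gamma(s)$ and velocity $J(s)=V(s)$ at $t=0$, we conclude $f(s,t)=\exp(t\,V(s))$.

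The symmetric computation finishes the proof. As $s$ varies, $t\mapsto f(s,t)$ is a geodesic variation of $\alpha$, so $W(t):=\partial_s f(s,t)\big|_{s=0}$ is a Jacobi field along $\alpha$ with $W(0)=\gamma'(0)=X$; the same connection symmetry gives $W'(0)=\frac{D}{\partial t}\partial_s f\big|_{(0,0)}=\frac{D}{\partial s}\partial_t f\big|_{(0,0)}=J'(0)=\mA$, so $W=X(t)$ and $f(s,t)=\exp(s\,X(t))$. Combining the two descriptions yields $\exp(t\,V(s))=f(s,t)=\exp(s\,X(t))$ for all $s,t$, which is precisely the condition that $\{X,V,\mA\}$ be a good triple.
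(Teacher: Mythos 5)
Your proposal is correct and follows essentially the same route as the paper's Section~\ref{S:subs}: the same surface $f(s,t)$ built from horizontal lifts of the base geodesic along the vertical geodesic, the same use of the holonomy isometries $h_s$ (via totally geodesic fibers) to see that the $t$-curves are geodesics, and the same identification $V_0'(0)=X_0'(0)=A(X,V)$ to match Jacobi-field initial data. The only difference is one of exposition --- you spell out the verification that $J'(0)=\mathcal{H}\nabla_V\widetilde X=A(X,V)$ (vanishing $T$-tensor plus verticality of $[\widetilde X,V]$) and flag the completeness hypothesis, both of which the paper leaves implicit in Equation~\ref{EE}.
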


Aside from its relevance to the submersion problem, the classification of doubly ruled parameterized surfaces in symmetric spaces is perhaps a problem of independent interest.  The motivation dates back to the classical result stating that the only doubly ruled surfaces in $\R^3$ are planes, hyperbolic paraboloids and hyperboloids of one sheet (see~\cite{HCV}).  This theorem has been generalized in several directions, for example to doubly ruled submanifolds of space forms in~\cite{LF}.  It is straightforward to see that any triple $\{X,V,\mA\}\subset\R^3$ is good, and that the corresponding doubly ruled parameterized surface is a plane if $\text{dim}(\text{span}\{X,V,\mA\})=2$, or is a hyperbolic paraboloid if $\text{dim}(\text{span}\{X,V,\mA\})=3$.  The hyperboloid of one sheet is evidently not a doubly ruled parameterized surface.  Of course, one can find immersed hyperbolic paraboloids inside of flats in symmetric spaces.

The main purpose of this paper is to initiate the exploration of good triples (and hence doubly ruled parameterized surfaces) in symmetric spaces.  For compact Lie groups, we characterize the good triples as follows:
\begin{theorem}\label{classify}  Let $G$ be a compact Lie group with a bi-invariant metric.  Let $\mg$ denote the Lie algebra of $G$.  The triple $\{X,V,\mA\}\subset\mg$ is good if and only if for all integers $n,m\geq 0$, we have
$$[\ad_X^n B,\ad_V^m\overline{B}]=0, $$ where $B=\frac 12[X,V]-\mA$ and $\overline{B} = \frac 12[V,X]-\mA$.
\end{theorem}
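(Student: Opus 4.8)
The plan is to work entirely in the left-trivialization, where the bi-invariant geometry becomes algebraic. Recall that geodesics emanating from a point $p$ have the form $t\mapsto p\exp(t\xi)$, so that $\exp(t\cdot V(s)) = \exp(sX)\exp(t\cdot v(s))$, where $v(s) = dL_{\exp(-sX)}V(s)\in\mg$, and similarly $\exp(s\cdot X(t)) = \exp(tV)\exp(s\cdot x(t))$ with $x(t)=dL_{\exp(-tV)}X(t)$. Writing a Jacobi field along $s\mapsto\exp(sX)$ in this trivialization, the Jacobi equation collapses to $j''+\ad_X j' = 0$; integrating with the initial data encoded by $V$ and $\mA$ gives
\[
v(s) = V - \int_0^s e^{-\tau\ad_X}B\,d\tau,\qquad x(t) = X - \int_0^t e^{-\tau\ad_V}\overline{B}\,d\tau,
\]
the point being that the covariant initial conditions force $v'(0) = -B$ and $x'(0) = -\overline{B}$. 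Thus the triple is good if and only if the two $G$-valued functions $F(s,t):=\exp(sX)\exp(t\,v(s))$ and $G(s,t):=\exp(tV)\exp(s\,x(t))$ coincide for all $s,t$.

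Next I would reduce this equality to a single geodesic condition. By construction the $t$-curves of $F$ and the $s$-curves of $G$ are geodesics, and $F$ and $G$ agree on the two coordinate axes $s=0$ and $t=0$. The key preliminary computation is that $\partial_s F(0,t) = X(t)$ holds automatically: using the standard formula for the differential of $\exp$ one finds that $\partial_s F(0,t)$ in left-trivialization equals $e^{-t\ad_V}X - \int_0^t e^{-\tau\ad_V}B\,d\tau$, which equals $x(t)$ after invoking $B-\overline{B} = [X,V]$. Granting this, uniqueness of geodesics shows that $F=G$ if and only if every transversal curve $s\mapsto F(s,t)$ is itself a geodesic.

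The heart of the argument is therefore to compute $\omega(s,t):=F^{-1}\partial_s F\in\mg$ and to express the geodesic condition $\partial_s\omega\equiv 0$ algebraically. Applying the differential-of-exponential formula to $F=\exp(sX)\exp(t\,v(s))$ gives $\omega(s,t) = e^{-t\ad_{v(s)}}X + \frac{1-e^{-t\ad_{v(s)}}}{\ad_{v(s)}}\,v'(s)$; expanding in powers of $t$, the coefficient of $t^k$ for $k\ge 1$ simplifies to $\frac{(-1)^k}{k!}\ad_{v(s)}^{k-1}\psi(s)$, where $\psi(s):=[v(s),X]-v'(s)$. I expect the decisive simplification here to be the identity $\psi'(s)\equiv 0$, so that $\psi(s)$ is the constant $\psi(0)=\overline{B}$; this is what collapses the a priori complicated dependence on $v(s)$ sitting inside the adjoint exponentials. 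Consequently the geodesic condition is equivalent to the statement that $\ad_{v(s)}^{m}\overline{B}$ is independent of $s$ for every $m\ge 0$.

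Finally I would close the loop by a double induction. Write $\ad_{v(s)} = \ad_V + \ad_{w(s)}$ with $w(s) = -\int_0^s e^{-\tau\ad_X}B\,d\tau$, so that $w(s)$ lies in $\mathfrak{a}:=\spn\{\ad_X^n B : n\ge 0\}$ for every $s$. If all the commutators $[\ad_X^n B,\ad_V^m\overline{B}]$ vanish, then $[\,w(s),\ad_V^m\overline{B}\,]=0$ and an induction on $m$ yields $\ad_{v(s)}^{m}\overline{B} = \ad_V^m\overline{B}$, which is $s$-independent. Conversely, assuming $s$-independence for all $m$, one peels the conditions off one level at a time: the $s$-independence of $\ad_{v(s)}^{m+1}\overline{B}$ together with the already-established lower-order vanishing forces $[\,w(s),\ad_V^m\overline{B}\,]\equiv 0$, and expanding this identity as a power series in $s$ and matching coefficients gives exactly $[\ad_X^n B,\ad_V^m\overline{B}]=0$ for all $n$. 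I anticipate that the main obstacle is the middle step, namely carrying out the $\omega$-computation cleanly and recognizing the collapse $\psi\equiv\overline{B}$, since everything afterward is a formal consequence of the commuting relations.
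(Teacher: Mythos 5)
Your proposal is correct and takes essentially the same route as the paper: left-trivialization with the closed-form Jacobi fields ($v'(0)=-B$), reduction of goodness to the $s$-independence of $\ad_{v(s)}^m\overline{B}$ for all $m\geq 0$ (your collapse $\psi(s)\equiv\overline{B}$ is exactly the paper's Equation~\ref{alongtheway} with the roles of $X$ and $V$ interchanged), and extraction of the brackets $[\ad_X^n B,\ad_V^m\overline{B}]=0$ by expanding $w(s)$ in powers of $s$ and matching coefficients, which is the paper's Equation~\ref{monkey}. The only differences are presentational (you rule the surface the other way and phrase the series via the differential-of-exponential formula in integral form), so no further comparison is needed.
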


Recall that the $A$-tensor of a left or right coset fibration is given by $$A(X,V)=\pm\frac 12[X,V]$$ for all $X\in\mg$ horizontal and $V\in\mg$ vertical (plus for a left coset fibration and minus for right), so the condition is obviously satisfied for the triple $\{X,V,A(X,V)\}$.

Using Theorem~\ref{classify}, we establish several rigidity properties for good triples.  For examples, we prove that the Jacobi fields induced by a good triple (as in Definition~\ref{D:good}) are of constant-length, provided they are of bounded-length.  Additionally, we prove that given $\{X,V\},$ there are only finitely many choices for $\mA$ for which $\{X,V,\mA\}$ is good, provided that $X$ (or $V$) satisfies a generic property which we call ``weakly regular.''

We also exhibit several examples of good triples $\{X,V,\mA\}$ in simple compact Lie groups which do not come from coset fibrations (because $\mA\neq\pm \frac 12 [X,V]$).  One example in which $X$ is not weakly regular has particularly surprising properties, which indicates that a full understanding of the good triples even in a particular compact Lie group can be subtle.

The $n=m=0$ case of the condition in Theorem~\ref{classify} says that $[[X,V], \mA]=0$.  In the final section of the paper, we prove that the analogous equation holds in more general spaces:

\begin{prop}\label{goodlocsym} If $\{X,V,\mA\}$ is a good triple in a locally symmetric space $M$, then $R(X,V)\mA=0$, where $R$ denotes the curvature tensor of $M$.
\end{prop}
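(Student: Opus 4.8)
The plan is to treat the good triple as the initial data of the doubly ruled parameterized surface $f(s,t)=\exp(t\cdot V(s))=\exp(s\cdot X(t))$ and to extract the desired curvature identity from the single fact that both families of coordinate curves of $f$ are geodesics. Write $f_s=\partial f/\partial s$, $f_t=\partial f/\partial t$, and let $\frac{D}{\partial s}$, $\frac{D}{\partial t}$ denote covariant differentiation of vector fields along $f$. From the expression $f(s,t)=\exp(t\cdot V(s))$, which for each fixed $s$ is a geodesic in $t$, and from $f(s,t)=\exp(s\cdot X(t))$, which for each fixed $t$ is a geodesic in $s$, I read off the two ruling conditions
$$\frac{D}{\partial t}f_t=0,\qquad \frac{D}{\partial s}f_s=0,$$
together with $f_t(s,0)=V(s)$ and $f_s(0,t)=X(t)$. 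Since the connection is torsion-free and $[\partial_s,\partial_t]=0$, the mixed derivative $W:=\frac{D}{\partial t}f_s=\frac{D}{\partial s}f_t$ is well defined, and evaluating the Jacobi fields at the origin gives $f_s(0,0)=X$, $f_t(0,0)=V$, and $W(0,0)=X'(0)=V'(0)=\mA$. It therefore suffices to prove that $R(f_s,f_t)W=0$ at $(0,0)$.

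The heart of the argument is a short computation valid at every point of $f$. First I would differentiate $W$ in each direction, commuting covariant derivatives and using the ruling conditions:
$$\frac{D}{\partial s}W=\frac{D}{\partial s}\frac{D}{\partial t}f_s=\frac{D}{\partial t}\frac{D}{\partial s}f_s+R(f_s,f_t)f_s=R(f_s,f_t)f_s,$$
$$\frac{D}{\partial t}W=\frac{D}{\partial t}\frac{D}{\partial s}f_t=\frac{D}{\partial s}\frac{D}{\partial t}f_t-R(f_s,f_t)f_t=-R(f_s,f_t)f_t.$$
I would then differentiate once more, now invoking the locally symmetric hypothesis $\nabla R=0$ to pass the covariant derivatives through $R$, and again using $\frac{D}{\partial s}f_s=\frac{D}{\partial t}f_t=0$ and $\frac{D}{\partial t}f_s=\frac{D}{\partial s}f_t=W$. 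This produces
$$\frac{D}{\partial t}\frac{D}{\partial s}W=R(W,f_t)f_s+R(f_s,f_t)W,\qquad \frac{D}{\partial s}\frac{D}{\partial t}W=-R(f_s,W)f_t-R(f_s,f_t)W.$$
Subtracting these and applying the commutation identity $\frac{D}{\partial s}\frac{D}{\partial t}W-\frac{D}{\partial t}\frac{D}{\partial s}W=R(f_s,f_t)W$ yields the key relation
$$3\,R(f_s,f_t)W=-R(f_s,W)f_t-R(W,f_t)f_s.$$

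The final step is purely algebraic. The first Bianchi identity gives $R(f_s,W)f_t+R(W,f_t)f_s=R(f_s,f_t)W$, so the right-hand side above equals $-R(f_s,f_t)W$, and the relation collapses to $3R(f_s,f_t)W=-R(f_s,f_t)W$, i.e. $R(f_s,f_t)W=0$ identically along $f$. Evaluating at $(0,0)$ and substituting $f_s(0,0)=X$, $f_t(0,0)=V$, $W(0,0)=\mA$ gives $R(X,V)\mA=0$, as claimed. I expect the only real care to be needed at the setup stage: correctly identifying $W(0,0)=\mA$ from the two Jacobi fields and keeping the sign conventions straight in the commutation formulas. Once the relation $3R(f_s,f_t)W=-R(f_s,W)f_t-R(W,f_t)f_s$ is established, the Bianchi cancellation is immediate, and the symmetric-space hypothesis enters only through $\nabla R=0$.
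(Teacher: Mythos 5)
Your proposal is correct and takes essentially the same route as the paper: both differentiate the two geodesic ruling conditions of the doubly ruled surface, use the locally symmetric hypothesis $\nabla R=0$ to pass covariant derivatives through the curvature tensor, invoke the commutation identity for second covariant derivatives along the surface, and close with the first Bianchi identity. The only difference is bookkeeping: you arrive at $4\,R(X,V)\mA=0$ while the paper's final relations yield $2\,R(X,V)\mA=0$ (a harmless sign discrepancy in handling the third-order term, where your sign tracking is in fact the more careful), and in either case the conclusion $R(X,V)\mA=0$ follows.
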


It is our pleasure to thank Craig Sutton for many helpful discussions.  We also wish to thank Dartmouth College and AIM for providing hospitality.  Part of this work was completed at the AIM worship on nonnnegative curvature in September 2007.


\section{Transitive holonomy}
In this section, we prove Proposition~\ref{hol} as an application of Wilking's dual foliations work (more specifically, Theorem 3b and Proposition 6.1 of~\cite{W}).  The idea of the following proof is essentially found in~\cite{holonomy}.
\begin{proof}[Proof of Proposition~\ref{hol}]
Let $G$ be a simple compact Lie group with a bi-invariant metric.  Let $\mg$ denote its Lie algebra.  Let $\pi:G\ra B$ be a Riemannian submersion.  Denote by $\Hor_e$ and $\mV_e$ the horizontal and vertical spaces at the identity $e\in G$. Let $\tilde{H}$ denote the holonomy group of the submersion, which means the group of all diffeomorphisms of the fiber $\pi^{-1}(e)$ associated to piecewise-smooth loops in $B$ based at $\pi(e)$.

Let $\mm\subset\V_e$ denote the space of vertical vectors orthogonal to the holonomy orbit $\tilde{H}\star e$.  It will suffice to prove that $\mm=\{0\}$.

Suppose there exists a non-zero vector $V\in\mm$.  By Wilking's above-mentioned results, for all $X\in\Hor_e$, $\sigma=\text{span}\{X,V\}$ is a zero-curvature plane, so $[X,V]=0$.  Let $\alpha(t)=\exp(tX)$ and let $V(t)$ be the parallel field along $\alpha(t)$ with $V(0)=V$.  Since $\sigma$ is tangent to a totally geodesic flat in $G$, $V(t)$ is both right and left invariant along $\alpha(t)$.  In particular, $\Ad_{\alpha(t)}V=V$ for all $t$.

More generally, if $\alpha(t)$ is any horizontal piecewise geodesic path in $G$, then $\Ad_{\alpha(t)}V=V$ for all $t$.  To see this, suppose that $\{\alpha(t_1),\alpha(t_2),...\}$ are the non-smooth points of $\alpha$.  Let $V(t)$ be the parallel transport of $V$ along $\alpha(t)$.  As shown in~\cite{W}, $V(t)$ remains orthogonal to the holonomy orbits, so $\text{sec}\{Y,V(t_1)\}=0$ for all horizontal vectors $Y$, particularly for the right-derivative $Y=\alpha'(t_1^+)$.  Then $V|_{[t_1,t_2]}$ is left (respectively right) invariant along $\alpha|_{[t_1,t_2]}$ because left (respectively right) multiplication by $\alpha(t_1)^{-1}$ sends it to a parallel field along a geodesic which exponentiates to a flat, and is therefore left (respectively right) invariant.

Therefore, $\Ad_g V=V$ for all $g\in\tilde{H}\star e$.  It follows that $[Z,V]=0$ for all $Z\in\V_e$ tangent to $\tilde{H}\star e$.  Thus, $\mg=\mm\oplus(\mm^{\perp})$ is a decomposition of $\mg$ into commuting subspaces.  It follows that $\mm$ and $\mm^{\perp}$ are both ideals of $\mg$, which contradicts the hypothesis that $G$ is a simple Lie group.
\end{proof}
\section{The geometry of Riemannian submersions with totally geodesic fibers}\label{S:subs}
In this section, we summarize some facts about Riemannian submersions $\pi:M\ra B$ with totally geodesic fibers, and prove Proposition~\ref{P:g}.  Let $p\in M$.  Let $\mH_p$ and $\mV_p$ denote the horizontal and vertical spaces of $\pi$ at $p$.

We will describe how a pair $\{X,V\}$ (with $X\in\Hor_p$ and $V\in\V_p$) determines a parameterized surface in $M$.
Let $t\mapsto\gamma(t)$ denote the vertical geodesic in $M$ with $\gamma(0)=p$ and $\gamma'(0)=V$.  Let $s\mapsto\alpha_0(s)$ denote the horizontal geodesic with $\alpha_0(0)=p$ and $\alpha_0'(0)=X$.  Notice that $s\mapsto\pi(\alpha_0(s))$ is a geodesic in $B$.  For each $t\in\R$, let $s\mapsto\alpha_t(s)$ denote the horizontal lift of $s\mapsto\pi(\alpha_0(s))$ beginning at $\alpha_t(0) = \gamma(t)$.  The parameterized surface $f(s,t):=\alpha_t(s)$ has several special properties, which we discuss next.

First, by definition, this surface is ruled by horizontal geodesics.  Therefore, for any fixed $t_0\in\R$, the variational field $s\mapsto V_{t_0}(s):=\frac{\partial f}{\partial t}(s,t_0)$ is a vertical Jacobi field along the horizontal geodesic $s\mapsto f(s,t_0)=\alpha_{t_0}(s).$  These fields are often called ``holonomy Jacobi fields'' because they are related to the ``holonomy isometries,'' $h_s:\pi^{-1}(\pi(p))\ra\pi^{-1}(\pi(\alpha_0(s)))$ between the fibers.  Recall that for $q\in\pi^{-1}(\pi(p))$, $h_s(q)$ is defined as the endpoint of the horizonal lift of $\pi\circ\alpha_0|_{[0,s]}$ beginning at $q$.  The holonomy Jacobi fields record the derivatives of the holonomy isometries; that is, $d(h_s)_p(V)=V_0(s)$.  Since $h_s$ is an isometry, we learn that $V_0$ (and similarly each $V_{t_0}$) is a constant-length Jacobi field.

Second, this surface is also ruled by vertical geodesics.  That is, for each $s_0\in\R$, the path $t\mapsto f(s_0,t)$ is a vertical geodesic, because it is the image of the geodesic $t\mapsto\gamma(t)$ under the holonomy isometry $h_{s_0}$.  Therefore, for any fixed $s_0\in\R$, the variational field $t\mapsto X_{s_0}(t):=\frac{\partial f}{\partial s}(s_0,t)$ is a horizontal Jacobi field along the vertical geodesic $t\mapsto f(s_0,t).$  This field is the basic lift of the vector $(\pi\circ\alpha_0)'(s_0)$ restricted to the vertical geodesic.  Viewed this way, it is clearly a constant-length Jacobi field.  We will refer to these fields as ``basic Jacobi fields''.

The vector $X\in\Hor_p$ extends naturally to the coordinate vector field $\frac{\partial f}{\partial s}(s,t)=X_s(t)$ over the surface (we denote this extension also as $X$).  Similarly, $V\in\V_p$ extends over the surface to the other coordinate vector field $\frac{\partial f}{\partial t}(s,t)=V_t(s)$ (we denote this extension also as $V$).  The $A$-tensor determines the derivative:
$$ \nabla_XV = \nabla_VX=A(X,V),$$
which is a horizontal vector field along the surface.  In particular at $p$,
\begin{equation}\label{EE}V_0'(0) = X_0'(0) = A(X,V)\in\Hor_p.\end{equation}

Notice that the roles of $X$ and $V$ are symmetric in the sense that the parameterized surface can be defined in the following two different ways:
\begin{equation}\label{different}f(s,t) = \exp(t\cdot V_0(s)) = \exp(s\cdot X_0(t)).\end{equation}
In particular, the triple $\{X,V,A(X,V)\}$ is good, which proves Proposition~\ref{P:g}.

\section{Submersions are determined at a point}
In this section, we show that a totally geodesic foliation is completely determined by the vertical space and $A$-tensor at a single point:
\begin{prop}\label{two}
Suppose that $f_i:M\ra B_i$ ($i=1,2$) is a pair of Riemannian submersions with the same complete total space $M$, both with connected totally geodesic fibers.  If at a single point $e\in M$, the two vertical spaces agree and the two $A$-tensors agree, then the two Riemannian submersions are the same: $B_1=B_2$ and $f_1=f_2$.
\end{prop}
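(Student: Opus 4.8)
The plan is to show that the two submersions have the same totally geodesic fibers everywhere by propagating the agreement at $e$ along geodesics, and then to reconstruct each submersion from its foliation. The key observation is that a Riemannian submersion with totally geodesic fibers is completely encoded by its vertical distribution $\mV$ together with the $A$-tensor, because these data determine how the horizontal lifts of base geodesics behave, and the doubly ruled parameterized surfaces of Definition~\ref{D:good} package exactly this information. Concretely, given $X\in\mH_e$ and $V\in\mV_e$, Proposition~\ref{P:g} tells us that $\{X,V,A(X,V)\}$ is a good triple, and the surface $f(s,t)=\exp(t\cdot V(s))=\exp(s\cdot X(t))$ of \eqref{different} is intrinsic to $M$ once $X$, $V$, and $A(X,V)$ are specified. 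Since by hypothesis $\mV_e^{(1)}=\mV_e^{(2)}$ and $A^{(1)}=A^{(2)}$ at $e$, the two submersions induce \emph{identical} doubly ruled surfaces through $e$.

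First I would use these surfaces to show that the vertical distributions agree along every horizontal geodesic emanating from $e$. Fix a horizontal geodesic $s\mapsto\alpha_0(s)=\exp(sX)$, which is common to both submersions since $\mH_e=\mV_e^\perp$ agrees. For each $V\in\mV_e$, the holonomy Jacobi field $V_0(s)=\frac{\partial f}{\partial t}(s,0)$ is determined by the good triple and hence is the same for both submersions; but $V_0(s)$ is tangent to the fiber of $f_i$ through $\alpha_0(s)$. As $V$ ranges over $\mV_e$, the vectors $V_0(s)$ span the vertical space of $f_i$ at $\alpha_0(s)$ (this is where I use that $V_0(s)=d(h_s)_e(V)$ and that $h_s$ is a fiber isometry, so it carries a basis of $\mV_e$ to a basis of the vertical space at $\alpha_0(s)$). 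Therefore $\mV^{(1)}=\mV^{(2)}$ at every point $\alpha_0(s)$, and since horizontal geodesics from $e$ reach an open neighborhood, the two vertical distributions agree on a neighborhood of $e$.

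Next I would promote local agreement to global agreement of the foliations. The natural tool is a connectedness/propagation argument: let $S\subset M$ be the set of points where the two vertical spaces and the two $A$-tensors agree. I have shown $S$ contains a neighborhood of $e$, so $S$ is open; the main work is to show $S$ is also closed (continuity of $\mV^{(i)}$ and $A^{(i)}$ gives this readily) and, crucially, that agreement of the $A$-tensor propagates along with agreement of $\mV$. For the latter I would run the same surface argument based at an arbitrary point $q\in S$ rather than at $e$: the horizontal Jacobi fields $X_{s_0}(t)$ (the ``basic Jacobi fields'') record $A$ along vertical geodesics, and matching vertical spaces plus matching surfaces forces $A^{(1)}=A^{(2)}$ at the new point. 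Since $M$ is connected and complete, $S=M$, so $f_1$ and $f_2$ have exactly the same fibers.

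The hard part will be this last step: verifying that the $A$-tensors continue to agree as we move away from $e$, not merely the vertical distributions. It is conceivable a priori that two submersions share all their fibers as a foliation but have been parameterized so that their $A$-tensors differ; I expect to rule this out by noting that for a totally geodesic foliation the $A$-tensor is determined by the vertical distribution via $A(X,V)=(\nabla_X V)^{\mathcal H}=-(\nabla_X W)^{\mathcal V}\big|\ldots$—more precisely, by the second fundamental form / integrability data of $\mV$, which is intrinsic to the distribution itself. Once $S=M$ is established, the two foliations coincide, so the fibers coincide; since fibers are connected and the base is the leaf space, the identity map on $M$ descends to an isometry $B_1\to B_2$ identifying $f_1$ with $f_2$, completing the proof.
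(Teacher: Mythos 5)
Your overall strategy---propagating the agreement at $e$ via Jacobi fields whose initial derivative is prescribed by the $A$-tensor, as in Equation~\ref{EE}---is the same as the paper's, and your first step is sound: the holonomy Jacobi fields along a horizontal geodesic $\alpha_0$ from $e$ have initial data $(V, A(X,V))$, which agree for both submersions, so $\mV^{(1)}=\mV^{(2)}$ at every point of $\alpha_0$. The genuine gap is the assertion that ``horizontal geodesics from $e$ reach an open neighborhood.'' They do not: they sweep out only $\exp_e(\mH_e)$, a set of dimension $\dim B_i<\dim M$ whenever the fibers are positive-dimensional. Consequently your set $S$ is never shown to be open and the open/closed argument does not get started. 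The missing ingredient is propagation in the \emph{vertical} directions, and your fallback remark---that $A$ is determined by the integrability data of $\mH=\mV^{\perp}$---cannot repair this: the integrability tensor at a point is computed from brackets of vector fields, so it requires knowing the distribution on an open set, whereas at that stage of your argument the distributions are known to agree only on a lower-dimensional set of points.

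The paper fills exactly this hole by working first along the common fiber $F=\exp(\mV_e)$ (the fibers are connected, and complete since $M$ is, so both submersions share $F$). Along the vertical geodesics in $F$, the \emph{basic} Jacobi fields---horizontal Jacobi fields with initial value a horizontal vector at $e$ and initial derivative $A(X,V)$, again by Equation~\ref{EE}---are identical for the two submersions; hence the horizontal distributions agree at every point $p\in F$, and the $A$-tensors agree at $p$ as well, because $A(X,V)$ at $p$ is recorded by the derivative of these same basic extensions (with $A$ on pairs of horizontal vectors recovered by duality). Only then does one shoot horizontal geodesics, from arbitrary points $p\in F$ rather than from $e$ alone: your holonomy-Jacobi-field step applies verbatim there, and completeness guarantees that every point of $M$ lies on such a geodesic, since a minimal geodesic from any point to the closed fiber $F$ meets $F$ orthogonally and a geodesic horizontal at one point is horizontal throughout. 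This ``vertical first, then horizontal'' sweep reaches all of $M$ in one pass, so no open/closed bookkeeping is needed. Your second paragraph does gesture at the basic Jacobi fields, but with the order of propagation muddled and resting on the false neighborhood claim; reorganized as above, your ingredients assemble into the paper's proof.
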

A more general result was proven in~\cite{Es1}, but we include the following self-contained proof:
\begin{proof}
The fibers are assumed to be connected, and are complete because $M$ is complete, so the two submersions must share a common fiber, $F$, though the point $e$, namely $F=\exp(\V_e)$.  We first show that the two horizontal distributions agree at all points of $F$.  These horizontal spaces are spanned by basic extensions to $F$ of a basis of the horizontal space at $e$.  These basic extensions are Jacobi fields along geodesics in $F$.  The derivatives of these Jacobi fields at $e$ (and thereby the entire Jacobi fields) are determined by the $A$-tensor at $e$, by Equation~\ref{EE}, and hence agree for the two submersions.

Next, fix a point $p\in F$.  At this point, the $A$-tensors for the two submersions agree because the $A$-tensor is determined by the basic extensions along $F$ of horizontal vectors at $p$, which are the same for both submersions.  It remains to show that the vertical spaces of the two submersions agree along a horizontal geodesic from $p$.  But these vertical spaces are spanned by the holonomy Jacobi fields, whose derivatives (and thereby values everywhere) are determined by the $A$-tensor at $p$, again by Equation~\ref{EE}.
\end{proof}

Now suppose $G$ is a compact Lie group with a bi-invariant metric, and $H\subset G$ is a compact Lie subgroup.  The left coset space $B=G/H$, or the right coset space $B=H\bs G$, inherits a unique metric such that the projection $\pi:G\ra B$ becomes a Riemannian submersion, called a (left or right) coset fibration.  It is easy to describe the $A$-tensor of $\pi$ at the identity $e\in G$.  For $X\in\Hor_e$ and $V\in\V_e$, we have:
$$A(X,V)=\pm(1/2)[X,V]$$
(plus for a left coset fibration, and minus for right).  The following converse holds:
\begin{cor}\label{cc}
Let $G$ be a compact Lie group with a bi-invariant metric.  Let $\pi:G\ra B$ be a Riemannian submersion with connected totally geodesic fibers.  If the $A$-tensor at the identity $e\in G$ is given by either the plus or the minus version of $A(X,V)=\pm(1/2)[X,V]$, then $\pi$ is a coset fibration.
\end{cor}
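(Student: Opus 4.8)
The strategy is to produce an explicit coset fibration whose vertical space and $A$-tensor at $e$ match those of $\pi$, and then invoke Proposition~\ref{two}. Since a coset fibration requires a subgroup, the first and main step is to show that the vertical space $\V_e$ is a Lie subalgebra of $\mg$.

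Because the $A$-tensor takes values in the horizontal space, the hypothesis $A(X,V)=\pm\frac12[X,V]$ forces $[X,V]\in\Hor_e$ for every $X\in\Hor_e$ and $V\in\V_e$; that is, $[\Hor_e,\V_e]\subseteq\Hor_e$. Using that the metric is bi-invariant, so that each $\ad_W$ is skew-adjoint, I would then compute, for $V_1,V_2\in\V_e$ and any $X\in\Hor_e$,
\[
\lb[V_1,V_2],X\rb=-\lb V_2,[V_1,X]\rb=0,
\]
since $[V_1,X]=-[X,V_1]\in\Hor_e$ is orthogonal to $V_2\in\V_e$. Hence $[V_1,V_2]\perp\Hor_e$, so $[V_1,V_2]\in\V_e$ and $\mh:=\V_e$ is a subalgebra.

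With $\mh$ a subalgebra, the fiber through $e$ is $F=\exp(\mh)$ (as in the proof of Proposition~\ref{two}), a connected, closed, totally geodesic submanifold with $T_eF=\mh$. Standard Lie theory then identifies $F$ with the connected Lie subgroup $H$ having Lie algebra $\mh$: the one-parameter subgroups $\exp(t\,W)$, $W\in\mh$, are geodesics remaining in $F$, so $\exp(\mh)\subseteq H$, while the reverse containment together with the fact that $F$ is closed (hence compact, so that $H$ cannot be a dense nonclosed subgroup) gives $F=H$. Thus $H$ is a closed connected subgroup of $G$.

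Finally, I would form the coset fibration $\pi'\colon G\ra G/H$ if the hypothesis uses the plus sign, or $\pi'\colon G\ra H\bs G$ if it uses the minus sign. In either case $\pi'$ has connected totally geodesic fibers, its vertical space at $e$ is $\mh=\V_e$, and by the formula recalled in the excerpt its $A$-tensor at $e$ is exactly $\pm\frac12[X,V]$, matching that of $\pi$. Proposition~\ref{two} then yields $\pi=\pi'$, so $\pi$ is a coset fibration. I expect the genuine content to be concentrated in the subalgebra computation of the second paragraph; the remaining steps are assembly, the only technical care being the passage from the subalgebra $\mh$ to a bona fide closed subgroup $H$, where closedness of the fiber is precisely what rules out the dense-subgroup pathology.
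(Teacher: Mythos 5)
Your proposal is correct and follows essentially the same route as the paper: your skew-adjointness computation showing $\V_e$ is a subalgebra is just a transposed form of the paper's identity $\lb [U,V],X\rb = \lb [X,U],V\rb = \pm 2\lb A(X,U),V\rb = 0$, after which both arguments conclude by applying Proposition~\ref{two} to the coset fibration of the connected subgroup $H$ with Lie algebra $\V_e$. Your additional verification that $H$ is closed (via the identification of $H$ with the closed fiber $F=\exp(\V_e)$) is a detail the paper leaves implicit, but it does not alter the approach.
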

\begin{proof}
The vertical space $\V_e$ is a subalgebra because for all $U,V\in\V_e$ and all $X\in\Hor_e$,
$$\lb [U,V],X\rb = \lb [X,U],V\rb = \pm 2\lb A(X,U),V\rb = 0.$$
Let $H$ be the connected Lie subgroup of $G$ with Lie algebra $\mV_e$.  Proposition~\ref{two} implies that $\pi$ equals a fibration by cosets of $H$.
\end{proof}


\section{Bounded Jacobi fields in a compact Lie group}\label{S:jf}
Let $G$ be a compact Lie group with a bi-invariant metric.  In this section, we describe the bounded-length Jacobi fields in $G$.

Let $\mg$ denote the Lie algebra of $G$, and let $X\in\mg$.  Let $\gamma(s)=\exp(sX)$.  Decompose $\mg$ into eigenspaces of $\ad_X^2$:
$$\mg = \mU_0 \oplus\sum_{i=1}^{n} \mU_i,$$
with corresponding eigenvalues $0=\lambda_0^2>-\lambda_1^2>\cdots>-\lambda_n^2$.

For $V\in\mg$, we let $s\mapsto P_sV$ denote its parallel transport along $\gamma$.  The Jacobi fields along $\gamma$ are exactly the vector fields of the form:
\begin{equation}\label{Jacobi}J(s) = P_sE_0 + s\cdot P_sF_0 +\sum_{i=1}^n  \cos\left(\frac{\lambda_is}{2}\right)\cdot P_sE_i +\sin\left(\frac{\lambda_i s}{2}\right)\cdot P_sF_i,\end{equation}
where $E_i,F_i\in \mU_i$ for each $0\leq i\leq n$.  Equation~\ref{Jacobi} describes the unique Jacobi field along $\gamma$ with the initial data:
$$J(0)=E_0+\sum_{i=1}^n E_i,\,\,\,\text{ and }\,\,\, J'(0) = F_0 + \sum_{i=1}^n \frac{\lambda_i}{2} \cdot F_i.$$
To verify Equation~\ref{Jacobi}, it is enough to check that this vector field satisfy the Jacobi equation by using that $R$, the curvature tensor of $G$, is parallel and that $R(X,J)X = -(1/4)\ad_X^2(J)$.

 Clearly $J$ has bounded length if and only if $F_0=0$.  In this case, $J$ has constant length if and only if and for each $1\leq i\leq n$, $E_i$ and $F_i$ are orthogonal and of the same length.

The following provides a useful alternative way to describe bounded Jacobi fields in $G$:
\begin{lem}\label{LRJ} Any bounded-length Jacobi field in $G$ along $\gamma$ equals the restriction to $\gamma$ of a left-invariant vector field plus a right-invariant vector field.
\end{lem}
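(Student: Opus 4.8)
The plan is to exhibit the restrictions to $\gamma$ of (left-invariant plus right-invariant) fields and the bounded Jacobi fields as two subspaces of the same finite-dimensional space, and then to show they coincide by a dimension count. The first input is that each such restriction is already a bounded Jacobi field. Since the metric is bi-invariant, the flow of a left-invariant field $W^L$ is right translation $g\mapsto g\exp(tW)$ and the flow of a right-invariant field $U^R$ is left translation $g\mapsto\exp(tU)g$; both are isometries, so $W^L$ and $U^R$ are Killing fields. The restriction of any Killing field to a geodesic is a Jacobi field, and since $G$ is compact every vector field has bounded length; hence $(W^L+U^R)|_\gamma$ is a bounded Jacobi field for all $W,U\in\mg$.

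I would then study the linear map $\Phi:\mg\oplus\mg\to\{\text{bounded Jacobi fields along }\gamma\}$, $\Phi(W,U)=(W^L+U^R)|_\gamma$. Trivializing by left translation, $W^L$ becomes the constant $\mg$-valued function $W$ and $U^R$ becomes $s\mapsto\Ad_{\gamma(s)^{-1}}U=e^{-s\ad_X}U$, so $\Phi(W,U)=0$ means $W+e^{-s\ad_X}U=0$ for all $s$. Evaluating at $s=0$ gives $W=-U$, and the resulting identity $e^{-s\ad_X}U=U$ forces $\ad_X U=0$. Because $\ad_X$ is skew-adjoint for the bi-invariant metric, $\ker\ad_X=\ker\ad_X^2=\mU_0$, so $\ker\Phi\cong\mU_0$ and $\dim(\operatorname{im}\Phi)=2\dim G-\dim\mU_0$.

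To finish, I would count the bounded Jacobi fields directly from Equation~\eqref{Jacobi}: the full space has dimension $2\dim G$ (parameterized by the $E_i,F_i$), and boundedness is exactly the condition $F_0=0$ with $F_0\in\mU_0$, so the bounded Jacobi fields form a space of dimension $2\dim G-\dim\mU_0$. This matches $\dim(\operatorname{im}\Phi)$, and since $\operatorname{im}\Phi$ is a subspace of the bounded Jacobi fields of equal finite dimension, the two spaces coincide, which is the assertion.

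I expect the only delicate point to be keeping the dimension bookkeeping tight, namely verifying $\ker\ad_X=\mU_0$ so that $\dim\ker\Phi$ exactly absorbs the single constraint $F_0=0$, and confirming that $\operatorname{im}\Phi$ really lands among the bounded Jacobi fields. If one prefers to avoid the counting argument, one can instead use the left-trivialization $e^{-\frac s2\ad_X}$ of parallel transport to solve $W+e^{-s\ad_X}U=e^{-\frac s2\ad_X}\bigl(E_0+\sum_i\cos(\lambda_i s/2)E_i+\sin(\lambda_i s/2)F_i\bigr)$ eigenspace by eigenspace, obtaining $W_i=\tfrac12 E_i-\tfrac1{2\lambda_i}\ad_X F_i$ and $U_i=\tfrac12 E_i+\tfrac1{2\lambda_i}\ad_X F_i$ on $\mU_i$; this is a direct construction but needs the parallel-transport formula and a short trigonometric computation.
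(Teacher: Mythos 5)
Your proposal is correct and takes essentially the same route as the paper: both exhibit restrictions of left-invariant plus right-invariant (Killing) fields as bounded Jacobi fields and conclude by the identical dimension count, comparing $2\dim\mg-\dim\mU_0=\dim\mg+\dim(\mU_0^\perp)$ against the space of bounded Jacobi fields cut out by $F_0=0$ in Equation~\ref{Jacobi}. Your kernel computation for $\Phi$ (using $\ker\ad_X=\ker\ad_X^2=\mU_0$ by skew-adjointness) merely makes explicit the paper's observation that the left and right extensions of $V$ coincide exactly when $V\in\mU_0$.
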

\begin{proof}
For each vector V in $\mU_0$, the left and right extensions of V along $\gamma$ are the same bounded Jacobi field.  For each V in $\mU_0^\perp$, the left and right extensions are different bounded Jacobi fields.  Taken together, all these fields span a space of bounded Jacobi fields of dimension equal to $\text{dim}(\mU_0) + 2\text{dim}(\mU_0^\perp)=\text{dim}(\mg)+\text{dim}(\mU_0^\perp)$.  But this is the dimension of the space of all bounded Jacobi fields along $\gamma$, since each one is determined by its initial value (in $\mg$) and its initial derivative (in $\mU_0^\perp$).
\end{proof}

In other words, if $J$ is a bounded Jacobi field along $\gamma$, then there exist ``left'' and ``right'' vectors $J_L,J_R\in\mg$ such that $J(s)=dL_{\gamma(s)}J_L + dR_{\gamma(s)}J_R$.  The pull-back of $J(s)$ to $\mg$ via left multiplication is:
\begin{equation}\label{JacobiLR}\hat{J}(s):=dL_{\gamma(s)^{-1}}J(s) = J_L + \Ad_{e^{-sX}}(J_R).\end{equation}
By the above proof, $J_R$ can be chosen in $\mU_0^\perp$, and this added condition makes the choices of $J_L$ and $J_R$ become unique.

It is useful to describe $\hat{J}(s)$ purely in terms of the Jacobi field's initial data, $V:=J(0)$ and $\mA:=J'(0)$ (rather than $J_L$ and $J_R$). For this, notice $\mA = \frac 12[X,J_L-J_R]$, so we can substitute
$$J_L=\frac 12 V + \ad_X^{-1}(\mA)\,\,\,\,\text{ and }\,\,\,\,J_R=\frac 12 V - \ad_X^{-1}(\mA)$$
into Equation~\ref{JacobiLR}, obtaining:
\begin{eqnarray}
\hat{J}(s) & = & \frac 12 V + \ad_X^{-1}(\mA) + \Ad_{e^{-sX}}\left(\frac 12 V - \ad_X^{-1}(\mA)\right)\label{JacJunk}\\
    & = & V + \sum_{n=1}^\infty\frac{(-s)^n}{n!}\left(\frac 12\ad_X^n V-\ad_X^{n-1}\mA\right)\label{JacobiVA}.
\end{eqnarray}

In Equation~\ref{JacJunk}, the expression $\ad_X^{-1}(\mA)$ is well-defined only because of our assumption that the Jacobi field has bounded length, so that $\mA$ is perpendicular to the null-space of $\ad_X^2$.  Nevertheless, it is straightforward to check that Equation~\ref{JacobiVA} is valid for bounded or unbounded Jacobi fields.  For this, simply write an unbounded Jacobi field as the sum of a bounded plus a linear Jacobi field.

\section{The Classification of Good triples in a compact Lie Group}
The goal of this section is to prove Theorem~\ref{classify}.  Let $G$ be a compact Lie group with bi-invariant metric.  Let $\mg$ denote the Lie algebra of $G$.
Let $\{X,V,\mA\}\subset\mg$.  This triple defines a parameterized surface in two ways:
\begin{enumerate}
\item Let $X(t)$ denote the Jacobi field along the geodesic $t\mapsto e^{tV}$ with $X(0)=X$ and $X'(0) = \mA$, and let $f(s,t) = \exp_{e^{tV}}(sX(t))$.
\item Let $V(s)$ denote the Jacobi field along the geodesic $s\mapsto e^{sX}$ with $V(0)=V$ and $V'(0) = \mA$, and let $\tilde{f}(s,t) = \exp_{e^{sX}}(tV(s))$.
\end{enumerate}
Recall that $\{X,V,\mA\}$ is called good if and only if $f(s,t)=\tilde{f}(s,t)$ for all $s,t\in\R$.
Denote the pull-backs to $\mg$ via left-multiplication as:
$$\hat{X}(t):=dL_{e^{-tV}}X(t)\in\mg\,\,\,\text{ and }\,\,\,\hat{V}(s):=dL_{e^{-sX}}V(s)\in\mg.$$
By Equation~\ref{JacobiVA} (with the roles of $X$ and $V$ interchanged),
\begin{equation}\label{jjjj}\hat{X}'(0) = \mA + \frac 12 \ad_X V.\end{equation}

Let $Y(s,t)\in\mg$ denote the pull-back to $\mg$ via left-multiplication of the $\ddt$ coordinate vector field of the parameterizes surface $f$; that is, $Y(s,t):=dL_{f(s,t)^{-1}} \frac{\partial f}{\partial t}(s,t)$.
Using Equations~\ref{JacobiVA} and~\ref{jjjj}, we have:
\begin{eqnarray*}
Y(s,0) = \hat{V}(s) & = & V + \sum_{n=1}^\infty\frac{(-s)^n}{n!}\left(\frac 12\ad_X^n V-\ad_X^{n-1}\mA\right)\\
                    & = & V + \sum_{n=1}^\infty\frac{(-s)^n}{n!}\left(\ad_X^n V-\ad_X^{n-1}\left(\hat{X}'(0)\right)\right).
\end{eqnarray*}
It is straightforward to generalize this to the case where ``$0$'' is replaced by an arbitrary $t\in\R$:
\begin{equation}\label{YST}Y(s,t) = V + \sum_{n=1}^\infty\frac{(-s)^n}{n!}\left(\ad_{\hat{X}(t)}^n V-\ad_{\hat{X}(t)}^{n-1}(\hat{X}'(t))\right).\end{equation}

Now assume that $\{X,V,\mA\}$ is a good triple, so that for each fixed $s$, the curve $t\mapsto f(s,t)$ is a geodesic.   In a compact Lie group with a bi-invariant metric, a geodesic's velocity field is obtained by left-translating its initial velocity vector; that is, $\frac{\partial}{\partial t}f(s,t) = dL_{f(s,t)\cdot f(s,0)^{-1}}\left(\frac{\partial}{\partial t}f(s,0)\right)$.  Therefore,
$$Y(s,t) = dL_{e^{-sX}}(V(s)) = \hat{V}(s) = Y(s,0).$$
Since the expression for $Y(s,t)$ in Equation~\ref{YST} does not depend on $t$, we learn that:
\begin{condition}\label{Q1}For each $n\geq 1$, the expression $\ad_{\hat{X}(t)}^n V-\ad_{\hat{X}(t)}^{n-1}(\hat{X}'(t))$ is independent of $t$.
\end{condition}

In fact, Condition~\ref{Q1} is equivalent to the condition that $\{X,V,\mA\}$ is good.  To see this, observe that Condition~\ref{Q1} implies that for each fixed $s$, the curve $t\mapsto f(s,t)$ is a geodesic.  By definition, $f(s,0)=\tilde{f}(s,0)$ and $\frac{\partial f}{\partial t}(s,0) = \frac{\partial \tilde{f}}{\partial t}(s,0)$, so the geodesics $t\mapsto\tilde{f}(s,t)$ and $t\mapsto f(s,t)$ must agree (as they are geodesics with the same initial position and velocity), which implies that $\tilde{f}=f$.

Next we claim that Condition~\ref{Q1} is equivalent to the following:
\begin{condition}\label{Q2}For each $m\geq 0$, the expression $\ad_{\hat{X}(t)}^m \left(\frac 12\ad_X V -\mA\right)$ is independent of $t$.\end{condition}
To demonstrate this, first re-write the expression in Condition~\ref{Q1} as:
$$\ad_{\hat{X}(t)}^n V-\ad_{\hat{X}(t)}^{n-1}(\hat{X}'(t)) = \ad_{\hat{X}(t)}^{n-1}\left(\ad_{\hat{X}(t)}V-\hat{X}'(t)\right),$$
so it remains simplify the above inner expression as:
\begin{equation}\label{alongtheway}\ad_{\hat{X}(t)}V-\hat{X}'(t) = -\ad_V\hat{X}(t) - \hat{X}'(t) = \frac 12\ad_X V -\mA.\end{equation}

To justify the second equality in Equation~\ref{alongtheway}, we use Equation~\ref{JacobiVA} (with the roles $X$ and $V$ interchanged) to express $\hat{X}(t)$ as:
\begin{equation}\label{sway}\hat{X}(t) = X + \sum_{n=1}^\infty\frac{(-t)^n}{n!}\left(\frac 12\ad_V^n X-\ad_V^{n-1}\mA\right),\end{equation}
so,
\begin{eqnarray*}
\ad_V\hat{X}(t) + \hat{X}'(t) & = & \ad_V X +\sum_{n=1}^\infty\frac{(-t)^n}{n!}\left(\frac 12\ad_V^{n+1} X-\ad_V^n\mA\right)\\
   &   & +\left(\mA-\frac 12 \ad_VX \right) + \sum_{n=1}^\infty-\frac{(-t)^n}{n!}\left(\frac 12\ad_V^{n+1} X-\ad_V^n\mA\right)\\
   & = & \mA+\frac 12\ad_V X = \mA-\frac 12 \ad_X V
\end{eqnarray*}
This verifies that Conditions~\ref{Q2} is equivalent to Condition~\ref{Q1}, and is therefore equivalent to the condition that $\{X,V,\mA\}$ is a good triple.

It remains to show that Condition~\ref{Q2} is equivalent to the condition in Theorem~\ref{classify}.  Condition~\ref{Q2} says that for each $n\geq 0$, the expression $\ad_{\hat{X}(t)}^n(B)$ is independent of $t$.  Using Equation~\ref{sway}, this implies that:
\begin{eqnarray}\label{monkey}
\ad_X^{n+1} B & = & \ad_{\hat{X}(t)}(\ad^n_{\hat{X}(t)}B) = \ad_{\hat{X}(t)}(\ad_X^n B)\\
              & = & \ad_X^{n+1}B + \left[\sum_{m=0}^\infty\frac{(-t)^{m+1}}{(m+1)!}\ad_V^m\overline{B},\ad_X^n B\right],\notag
\end{eqnarray}
so $[\ad_V^m\overline{B},\ad_X^n B]=0$ for all $n,m\geq 0$.  On the other hand, if $[\ad_V^m\overline{B},\ad_X^n B]=0$ for all $n,m\geq 0$, then the equality of the second and fourth terms of Equation~\ref{monkey} gives $\ad_{\hat{X}(t)}^{n+1}(B) = \ad_X^{n+1} B$ for all $n\geq 0$, which is exactly Condition~\ref{Q2}.
This ends the proof of Theorem~\ref{classify}.

\section{Examples of good triples in a compact Lie group}
In this section, we construct examples of good triples in simple compact Lie groups which do not come from coset fibrations.  The idea is to construct one-dimensional biquotient submersions which have totally geodesic fibers along certain horizontal geodesics, so that the discussion in Section~\ref{S:subs} applies along these horizontal geodesics.

Let $G$ be a simple compact Lie group with a bi-invariant metric, let $\mg$ denote its Lie algebra, and let $X\in\mg$.  With respect to a maximal abelian subalgebra of $\mg$ containing $X$, suppose that there are two roots, $\alpha$ and $\beta$, such that $\alpha\pm\beta$ is not a root and such that $\alpha(X)\neq 0$ and $\beta(X)\neq 0$.  In this case, choose vectors $V_\alpha,V_\beta\in\mg$ from the corresponding two root spaces.  Notice that
\begin{equation}\label{lamb}[\Ad_{e^{sX}}V_\alpha,V_\beta]=0\,\,\text{ for all }s\in\R.\end{equation}
This pair of vectors determines an  $\R$-action on $G$, defined so that for $g\in G$ and $t\in\R$, $t\star g:=\left(e^{tV_\alpha}\right)\cdot g\cdot \left(e^{-tV_\beta}\right)$.  The vertical space at $\mg$ of the Riemannian submersion $\pi:G\ra G/\R$ is spanned by $V:=V_\alpha-V_\beta$.  Notice that $\gamma(s):=e^{sX}$ is a horizontal geodesic.  We claim that each fiber of $\pi$ which intersects $\gamma$ is totally geodesic.

To verify that the fibers along $\gamma$ are totally geodesic, we need only check that they are geodesics.  Using Equation~\ref{lamb}, for fixed $s\in\R$, the image of the fiber through $\gamma(s)$ under the left-translation map $L_{\gamma(s)^{-1}}$ equals the path
$$t\mapsto e^{-sX}e^{tV_\alpha}e^{sX}e^{-tV_\beta} = e^{\left(\Ad_{e^{-sX}}(tV_\alpha)\right)}\cdot e^{-tV_\beta} =
 e^{t(\Ad_{-sX}V_\alpha - V_\beta)},$$
which is the geodesic in the direction of $\Ad_{-sX}V_\alpha - V_\beta$.  Since the fibers along $\gamma$ are totally geodesic, the discussion in Section~\ref{S:subs} establishes that $\{X,V,\mA\}$ is a good triple, where $\mA:=A(X,V)=-\frac 12 [X,V_\alpha+V_\beta]$.  Alternately, it is straightforward to verify that this triple is good directly from Theorem~\ref{classify}.  Since $\alpha(X)\neq 0$ and $\beta(X)\neq 0$, notice that $A(X,V)\neq \pm \frac 12 [X,V]$, which distinguishes this biquotient fibration from coset fibrations.

\section{An alternative characterization of good triples in a compact Lie group}
In Theorem~\ref{classify}, the condition under which $\{X,V,\mA\}$ is good is symmetric in $X$ and $V$, as expected.  However, for Riemannian submersions with totally geodesic fibers, holonomy Jacobi fields differ from basic Jacobi fields, so it is useful to break the symmetry by focusing on only one type of Jacobi field.  Therefore, we will now derive conditions under which a single bounded Jacobi field comes from a good triple. These new conditions will be used in Section 10 to construct examples of good triples with unexpected properties.

Let $X\in\mg$, and let $V(s)$ be a bounded Jacobi field along the geodesic $s\mapsto e^{sX}$.  Choose $V_L,V_R\in\mg$, with $V_R$ orthogonal to the null-space of $\ad_X^2$, such that
$$\hat{V}(s) :=dL_{e^{-sX}}V(s) = V_L+\Ad_{e^{-sX}}V_R,$$
as in Equation~\ref{JacobiLR}.  Define $V:=V(0)=V_L+V_R$ and $\mA:=V'(0)=\frac 12[X,V_L-V_R]$.  We seek conditions in terms of $\{X,V_L,V_R\}$ under which the triple $\{X,V,\mA\}$ is good.
\begin{cor}\label{LRCOR}
Let $G$ be a compact Lie group with a bi-invariant metric.  Let $X,V_L,V_R\in\mg$.  Assume that $V_R$ is orthogonal to the null-space of $\ad_X^2$.  Then the triple $$\left\{X,V_L+V_R,\frac 12[X,V_L-V_R]\right\}$$ is good if and only the following two conditions are satisfied:
\begin{enumerate}
\item $\left[\Ad_{e^{-tX}}V_R,\ad_X V_L\right]=0$ for all $t\in\R$, and
\item $\left[\ad_{V_L}^m X,\left[\Ad_{e^{-tX}}V_R,V_L\right]\right]=0$ for all $t\in\R$ and all $m\geq 1$.
\end{enumerate}
\end{cor}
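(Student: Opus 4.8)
The plan is to derive the corollary directly from Theorem~\ref{classify} and then rewrite the resulting symmetric condition in the asymmetric form (1)--(2). First I would specialize Theorem~\ref{classify} to the triple $\{X,V_L+V_R,\frac{1}{2}[X,V_L-V_R]\}$: a short computation gives $B=[X,V_R]$ and $\overline{B}=-[X,V_L]$, so the triple is good if and only if $[\ad_X^{n+1}V_R,\ad_V^m[X,V_L]]=0$ for all $n,m\geq0$, where $V=V_L+V_R$. Let $\mathcal{N}\subset\mg$ denote the common centralizer of the family $\{\Ad_{e^{-tX}}V_R:t\in\R\}$, equivalently of $\{\ad_X^jV_R:j\geq0\}$; since the bracket condition probes exactly membership in $\mathcal{N}$, the good condition says precisely that $\ad_V^m[X,V_L]\in\mathcal{N}$ for all $m\geq0$. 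Here the hypothesis $V_R\perp\ker\ad_X^2=\ker\ad_X$ enters: it forces $V_R\in\text{im}\,\ad_X$, where $\ad_X$ is invertible, so $V_R$ is a linear combination of the $\ad_X^jV_R$ with $j\geq1$, which lets me read the range $j\geq1$ (from the $\ad_X^nB$ appearing in Theorem~\ref{classify}) as the full range $j\geq0$ needed to identify $\mathcal{N}$.

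Next I would trade $\ad_V$ for $\ad_{V_L}$. Because $\mathcal{N}$ is a subalgebra (centralizers are subalgebras by the Jacobi identity) and $[V_R,W]=0$ for every $W\in\mathcal{N}$, one has $\ad_VW=\ad_{V_L}W$ for all $W\in\mathcal{N}$. A short induction then shows that whenever $Z,\ad_{V_L}Z,\dots,\ad_{V_L}^{m-1}Z$ all lie in $\mathcal{N}$ (where $Z:=[X,V_L]$), the two orbits agree: $\ad_V^mZ=\ad_{V_L}^mZ$. Consequently the good condition is equivalent to $\ad_{V_L}^mZ\in\mathcal{N}$ for all $m\geq0$. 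The case $m=0$ is $Z=[X,V_L]\in\mathcal{N}$, which, expanded as a power series in $t$, is exactly condition (1). Using $Z=-\ad_{V_L}X$, the cases $m\geq1$ become $[\Ad_{e^{-tX}}V_R,\ad_{V_L}^kX]=0$ for all $t$ and all $k\geq2$.

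It remains to match these last relations with condition (2), and this I would do with a single application of the Jacobi identity: writing $R(t):=\Ad_{e^{-tX}}V_R$, one has $[R(t),\ad_{V_L}^{m+1}X]=-[\ad_{V_L}^mX,[R(t),V_L]]+[V_L,[R(t),\ad_{V_L}^mX]]$. Granting condition (1) (so that the $k=1$ relation $[R(t),\ad_{V_L}X]=0$ already holds), this recursion lets me pass both ways: if all $[R(t),\ad_{V_L}^kX]$ vanish for $k\geq1$ then the recursion forces $[\ad_{V_L}^mX,[R(t),V_L]]=0$ for all $m\geq1$, and conversely condition (2) together with the vanishing of the $k=1$ term propagates $[R(t),\ad_{V_L}^kX]=0$ upward through all $k$. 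Assembling the chain yields good $\iff$ (1) and (2).

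I expect the main obstacle to be the passage from the symmetric $\ad_V$-condition of Theorem~\ref{classify} to the asymmetric $\ad_{V_L}$-form of the corollary. This works only because $\mathcal{N}$ is a subalgebra on which $V_R$ acts trivially, so that the replacement $\ad_V=\ad_{V_L}$ is legitimate one step at a time and then bootstraps along the orbit; keeping track of exactly which relations are available at each stage, and verifying that the hypothesis $V_R\perp\ker\ad_X^2$ is precisely what collapses the index range, is the delicate bookkeeping.
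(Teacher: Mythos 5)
Your proposal is correct, and it reaches the paper's key intermediate condition --- Condition~\ref{Q5}, that $\left[\Ad_{e^{-tX}}V_R,\ad_{V_L}^kX\right]=0$ for all $t\in\R$ and $k\geq 1$ --- by a genuinely different route. The paper does not work from the statement of Theorem~\ref{classify} but from Condition~\ref{Q2} of its proof (with roles reversed, this is Condition~\ref{Q4}: constancy in $s$ of $\ad_{\hat{V}(s)}^n\overline{B}$, where $\hat{V}(s)=V_L+\Ad_{e^{-sX}}V_R$), and it converts constancy into vanishing via the analytic Lemma~\ref{jojo}, whose proof decomposes $[\Ad_{e^{-sX}}W,U]$ into trigonometric circles in root spaces and argues that such circles can only sum to a constant by summing to zero; an induction on $n$ then extracts Condition~\ref{Q5} together with the identities $\ad_{\hat{V}(s)}^n\overline{B}=\ad_{V_L}^{n+1}X$. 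You instead take the bracket condition of Theorem~\ref{classify} at face value (computing $B=[X,V_R]$ and $\overline{B}=-[X,V_L]$ exactly as the paper does) and replace the analytic step with pure linear algebra: skew-symmetry of $\ad_X$ gives $\ker\ad_X^2=\ker\ad_X$ and puts $V_R$ in the image of $\ad_X$, where $\ad_X$ is injective, so $V_R\in\text{span}\{\ad_X^jV_R: j\geq 1\}$ and the range $j\geq 1$ arising from $\ad_X^nB$ already characterizes membership in the centralizer $\mathcal{N}$ of $\{\Ad_{e^{-tX}}V_R\}$; your bootstrap ($\ad_V=\ad_{V_L}$ on $\mathcal{N}$, since $V_R$ itself belongs to the centralized family, propagated by strong induction along the orbit of $Z=[X,V_L]$) then trades the symmetric $\ad_V$-condition for the asymmetric $\ad_{V_L}$-condition, and both directions of the bootstrap are valid as you set them up. This is a clean, entirely algebraic replacement for Lemma~\ref{jojo}; what the paper's route buys in exchange is the standalone Lemma~\ref{jojo} and the explicit formulas for $\ad_{\hat{V}(s)}^n\overline{B}$, which are reused later (the root-space and periodicity analysis reappears in the proof of Proposition~\ref{new}, e.g.\ in Equation~\ref{buba}), whereas your argument is self-contained given only the statement of Theorem~\ref{classify}. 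The final step --- the Jacobi-identity recursion relating $\left[\Ad_{e^{-tX}}V_R,\ad_{V_L}^{m+1}X\right]$ to condition (2), with condition (1) serving as the base case --- is identical to the paper's.
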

The remainder of this section is devoted to the proof of Corollary~\ref{LRCOR}.  Consider a root-space decomposition, $$\mg=\tau\oplus_{i\in I}\ml_i,$$ where $\tau$ is a maximal abelian subalgebra containing $X$, and $I$ is the index set for the two-dimensional root spaces $\ml_i$.  For $W\in\mg$, let $W^i$ denote the orthogonal projection of $W$ onto $\ml_i$ and let $W^0$ denote denote the projection onto $\tau$.

\begin{lem}\label{jojo} Let $W,U\in\mg$.  Assume $W$ is orthogonal to the null-space of $\ad_X^2$.  If $[\Ad_{e^{-sX}}W,U]$ is constant in $s$, then it vanishes.
\end{lem}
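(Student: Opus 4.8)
The plan is to exploit the root-space decomposition $\mg=\tau\oplus_{i\in I}\ml_i$ together with the fact that $\Ad_{e^{-sX}}$ acts on each two-dimensional root space $\ml_i$ as a rotation whose angular speed is the corresponding root evaluated on $X$. Concretely, if $W$ is orthogonal to the null-space of $\ad_X^2$, then $W$ has no component in the kernel of $\ad_X$, so writing $W=\sum_{i\in I} W^i$ (with $W^0$ absorbed into the kernel and hence zero on the relevant summands), each $\Ad_{e^{-sX}}W^i$ genuinely rotates at nonzero frequency $\lambda_i=\alpha_i(X)\neq 0$ inside the plane $\ml_i$. First I would record this explicitly: on each $\ml_i$ with an orthonormal frame $\{e_i,\bar e_i\}$, we have $\Ad_{e^{-sX}}e_i=\cos(\lambda_i s)e_i-\sin(\lambda_i s)\bar e_i$ and similarly for $\bar e_i$, so $\Ad_{e^{-sX}}W$ is a finite sum of genuinely oscillating terms with distinct nonzero frequencies (after grouping root spaces sharing a frequency).

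Next I would expand the bracket $[\Ad_{e^{-sX}}W,U]$ as a trigonometric polynomial in $s$. Using the rotation formulas, this expression becomes a finite linear combination of $\cos(\lambda_i s)$ and $\sin(\lambda_i s)$ with coefficients that are fixed vectors in $\mg$ (namely brackets like $[e_i,U]$ and $[\bar e_i,U]$). The key point is that the constant-in-$s$ hypothesis forces every oscillatory coefficient to vanish. I would invoke linear independence of the functions $\{1,\cos(\lambda_i s),\sin(\lambda_i s)\}$ over the distinct nonzero frequencies $\lambda_i$: a finite trigonometric polynomial that is identically constant must have all of its genuine $\cos$ and $\sin$ coefficients equal to zero. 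Therefore the vector coefficient multiplying each $\cos(\lambda_i s)$ and each $\sin(\lambda_i s)$ is zero, and the value of the bracket equals its constant term. Summing these vanishing relations shows the entire bracket is the zero vector, since the only surviving piece would be a frequency-zero term, which does not occur because $W$ has no kernel component.

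The main obstacle I anticipate is handling the bookkeeping when several distinct root spaces share the same frequency $\lambda_i$, i.e. when $\alpha_i(X)=\alpha_j(X)$ for $i\neq j$. In that case the $\cos(\lambda_i s)$ coefficient is a sum of contributions from all root spaces at that frequency, so vanishing of the coefficient is a statement about the sum rather than each individual bracket. However, this is still enough for the conclusion: I only need that each trigonometric coefficient (aggregated over a common frequency) vanishes, which gives that the whole sum telescopes to its frequency-zero part. The frequency-zero part is exactly $[\,(\Ad_{e^{-sX}}W)^{\ker \ad_X},U\,]$, which is zero precisely because $W$ is assumed orthogonal to the null-space of $\ad_X^2$ and hence has trivial projection onto $\ker\ad_X$. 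Thus no constant (frequency-zero) contribution survives, and the constant value of $[\Ad_{e^{-sX}}W,U]$ must be zero. The remaining care is simply to confirm the frequency grouping does not accidentally produce a nonzero constant, which the orthogonality hypothesis on $W$ rules out.
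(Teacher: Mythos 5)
Your proposal is correct and takes essentially the same approach as the paper: both expand $[\Ad_{e^{-sX}}W,U]$ via the root-space decomposition into a finite sum of oscillatory (trigonometric) terms whose frequencies are all nonzero precisely because $W$ is orthogonal to the null-space of $\ad_X^2$, and conclude that such a sum of mean-zero oscillations can be constant only if it vanishes identically. The only cosmetic difference is bookkeeping: you group terms by frequency and invoke linear independence of $1$, $\cos(\lambda s)$, $\sin(\lambda s)$ over distinct nonzero $\lambda$, while the paper groups by where the bracket lands (the $\tau$-component from diagonal terms versus circles traced in individual root spaces).
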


\begin{proof}
Assume that
\begin{eqnarray*}
[\Ad_{e^{-sX}} W,U] & = & \left[\sum_{i\in I}\Ad_{e^{-sX}}W^i,U^0+\sum_{j\in I} U^j\right] \\
                    & = &\sum_{i\in I}[\Ad_{e^{-tX}}W^i,U^0]+\sum_{i,j\in I}\left[\Ad_{e^{-tX}}W^i,U^j\right]
\end{eqnarray*}
is constant in $s$.  The $\tau$-component of this expression comes from the diagonal $i=j$ terms of the double sum.  Each such diagonal term has the form $C_i\cos(\lambda_i + a_it)r_i$ for some $C_i,\lambda_i,a_i\in\R$, where $r_i\in\tau$ is the associated dual root.  The only way that several terms of this form could sum to a constant function is if they sum to zero.

Similarly, every term of the single-sum and every non-diagonal term of the double-sum above equals a circle in a single root space.  That is, it has the form $C\cos(\lambda + at)E + C\sin(\lambda + at)F$, where $\{E,F\}$ is an orthonormal basis of one of the $\ml$'s.  The only way that several such circles could cancel so as to sum to a constant function is if they sum to zero.
\end{proof}

Define $B,\overline{B}$ as in Theorem~\ref{classify}.  Notice that $B=[X,V_R]$ and $\overline{B}=-[X,V_L]$.  Condition~\ref{Q2} (with the roles of $X$ and $V$ reversed) says $\left\{X,V_L+V_R,\frac 12[X,V_L-V_R]\right\}$ is good if and only if the following is satisfied:
\begin{condition}\label{Q4}For each $n\geq 0$, $\ad_{\hat{V}(s)}^n(\overline{B})$ is constant in $s$.
\end{condition}
It remains to show that Condition~\ref{Q4} is equivalent to the conditions of Corollary~\ref{LRCOR}, which is done as follows:

\begin{proof}[Proof of Corollary~\ref{LRCOR}]
Assume that the triple is good.  The $n=1$ case of Condition~\ref{Q4} says that the following is constant in $s$:
$$[\hat{V}(s),\overline{B}] = \left[V_L+\Ad_{e^{-sX}} V_R,-[X,V_L]\right] = \ad_{V_L}^2X - \left[\Ad_{e^{-sX}} V_R,[X,V_L]\right].$$
Lemma~\ref{jojo} implies that the second term of this last expression vanishes, so $$\left[\Ad_{e^{-sX}} V_R,[X,V_L]\right]=0 \,\,\text{ and }\,\,[\hat{V}(s),\overline{B}] = \ad_{V_L}^2 X .$$
Similarly, the $n=2$ case of Condition~\ref{Q4} gives that
$$[\hat{V}(s),[\hat{V}(s),\overline{B}]] = \left[V_L+\Ad_{e^{-sX}} V_R,\ad_{V_L}^2X\right] = \ad_{V_L}^3X + \left[\Ad_{e^{-sX}} V_R,\ad_{V_L}^2 X\right]$$
is constant in $s$, which implies that
$$\left[\Ad_{e^{-sX}} V_R,\ad_{V_L}^2 X\right]=0 \,\,\text{ and }\,\,\ad_{\hat{V}(s)}^2\overline B = \ad_{V_L}^3 X  .$$
Continue by induction to get that for all $n\geq 1$,
$$\left[\Ad_{e^{-sX}} V_R,\ad_{V_L}^n X\right]=0 \,\,\text{ and }\,\,\ad_{\hat{V}(s)}^n\overline B = \ad_{V_L}^{n+1} X  .$$
in particular:
\begin{condition}\label{Q5}
$\left[\Ad_{e^{-sX}}V_R,\ad^n_{V_L}X\right] = 0$ for all $s\in\R$ and all $n\geq 1$.
\end{condition}
In fact the above work shows that Condition~\ref{Q5} is equivalent to Condition~\ref{Q4}, so it remains to show that Condition~\ref{Q5} is equivalent to the two conditions of Corollary~\ref{LRCOR}.  Indeed, the $n=1$ case of Condition~\ref{Q5} coincides with condition (1) of Corollary~\ref{LRCOR}.  Moreover, by the Jacobi identity, we have:
\begin{eqnarray*}
\left[\ad_{V_L}^mX,\left[\Ad_{e^{-sX}}V_R,V_L\right]\right]
  & = &
       -\left[\Ad_{e^{-sX}}V_R,\left[V_L,\ad_{V_L}^mX\right]\right]
       -\left[V_L,\left[\ad_{V_L}^mX, \Ad_{e^{-sX}}V_R\right]\right]\\
  & = &-\left[\Ad_{e^{-sX}}V_R,\ad_{V_L}^{m+1}X\right]
       +\ad_{V_L}\left[\Ad_{e^{-sX}}V_R,\ad_{V_L}^mX \right],
\end{eqnarray*}
which completes the proof.
\end{proof}

\section{Good triples induce constant length Jacobi fields}

In this section, we prove that good triples induce \emph{constant-length} Jacobi fields, provided they induce bounded-length Jacobi fields, and we also prove a related finiteness result, which requires the following definition:
\begin{definition} Let $G$ be a compact Lie group. Let $\mg$ denote its Lie algebra.  The vector $X\in\mg$ is called ``weakly regular'' if for any nonzero eigenvalue of $\ad_X^2$, the collection of roots associated to the root spaces which comprise the corresponding eigenspace is linearly independent.
\end{definition}
\begin{prop}\label{new} Let $G$ be a compact Lie group with a bi-invariant metric.  Let $\mg$ denote its Lie algebra. Let $\{X,V,\mA\}\subset\mg$ be a good triple.  Assume that the Jacobi field $V(s)$ (defined as in Definition~\ref{D:good}) has bounded length.
\begin{enumerate}
\item The Jacobi field $V(s)$ has constant-length.
\item If $X$ is weakly regular, then $\mA$ projects onto each root space (with respect to a maximal abelian subalgebra containing $X$) as plus or minus half the bracket of $X$ with the projection of $V$ onto this root space.  In particular, there are only finitely many possibilities for $\mA$ (corresponding to the possible sign choices, not all of which necessarily yield good triples).
\end{enumerate}
\end{prop}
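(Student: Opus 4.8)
The plan is to work entirely in the left-trivialized picture of the bounded Jacobi field. Since $V(s)$ has bounded length, I may write $\hat{V}(s):=dL_{e^{-sX}}V(s)=V_L+\Ad_{e^{-sX}}V_R$ as in Equation~\ref{JacobiLR}, with $V_R$ orthogonal to the null-space of $\ad_X^2$, so that $V=V_L+V_R$ and $\mA=\frac12[X,V_L-V_R]$. Because $\{X,V,\mA\}$ is good, Corollary~\ref{LRCOR} applies; I will need only its first condition, $[\Ad_{e^{-tX}}V_R,\ad_X V_L]=0$ for all $t$.

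For part (1), since $dL$ and $\Ad_{e^{-sX}}$ are isometries,
\[
|V(s)|^2=|V_L|^2+|V_R|^2+2\,c(s),\qquad c(s):=\lb V_L,\Ad_{e^{-sX}}V_R\rb,
\]
and $|c(s)|\le|V_L|\,|V_R|$ is bounded. Differentiating and using $\ad$-invariance of the metric gives $c'(s)=\lb\ad_X V_L,\Ad_{e^{-sX}}V_R\rb$, and differentiating once more gives
\[
c''(s)=\lb\ad_X V_L,[\Ad_{e^{-sX}}V_R,X]\rb=\lb[\ad_X V_L,\Ad_{e^{-sX}}V_R],X\rb=0,
\]
the last equality by the first condition of Corollary~\ref{LRCOR}. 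Thus $c$ is affine in $s$; being bounded, it is constant, so $c'\equiv0$ and $|V(s)|$ is constant. Note this argument uses only the commuting relation and not weak regularity.

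For part (2) I would fix the root-space decomposition $\mg=\tau\oplus_{i\in I}\ml_i$ from the proof of Corollary~\ref{LRCOR}, with $X\in\tau$, and write $V_L^i,V_R^i$ for the projections onto $\ml_i$. For a root space with $\alpha_i(X)\ne0$, choosing an orthonormal basis $\{E_i,F_i\}$ adapted to $\ad_X$ (so $\ad_X$ rotates it at speed $\lambda_i=|\alpha_i(X)|$ and $[E_i,F_i]$ is a nonzero multiple of the dual root $r_i$), a direct computation identifies the $\tau$-component of $[\Ad_{e^{-sX}}V_R^i,\ad_X V_L^i]$ as $\kappa_i\,\lb V_L^i,\Ad_{e^{-sX}}V_R^i\rb\,r_i$ for a nonzero constant $\kappa_i$, an expression of the form $(P_i\cos\lambda_i s+Q_i\sin\lambda_i s)\kappa_i r_i$ with $P_i=\lb V_L^i,V_R^i\rb$ and $Q_i$ the symplectic pairing of $V_L^i,V_R^i$ in $\ml_i$. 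Only the diagonal ($i=j$) terms of $[\Ad_{e^{-sX}}V_R,\ad_X V_L]$ land in $\tau$, so the first condition of Corollary~\ref{LRCOR} forces $\sum_i(P_i\cos\lambda_i s+Q_i\sin\lambda_i s)\kappa_i r_i=0$ for all $s$. Grouping by the distinct frequencies $\lambda$ and using independence of $\{\cos\lambda s,\sin\lambda s\}$, then invoking weak regularity so that $\{r_i:\lambda_i=\lambda\}$ is linearly independent, I conclude $P_i=Q_i=0$ for every $i$ with $\alpha_i(X)\ne0$.

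Finally, $P_i=Q_i=0$ says $V_L^i$ and $V_R^i$ are simultaneously orthogonal and parallel in the plane $\ml_i$, forcing $V_L^i=0$ or $V_R^i=0$; in either case $[X,V_L^i-V_R^i]=\pm[X,V_L^i+V_R^i]$, so the $\ml_i$-projection of $\mA=\frac12[X,V_L-V_R]$ equals $\pm\frac12[X,V^i]$, while on the null-space of $\ad_X^2$ both sides vanish. Since there are finitely many root spaces, each contributing one sign, only finitely many $\mA$ arise. I expect the main obstacle to be the middle step of part (2): correctly pinning the $\tau$-component of the relevant bracket to a scalar multiple of $\lb V_L^i,\Ad_{e^{-sX}}V_R^i\rb\,r_i$ and reading off both scalars $P_i$ and $Q_i$ — this is exactly what lets weak regularity (a statement about the $r_i$) annihilate the orthogonality \emph{and} the parallelism of $V_L^i,V_R^i$ simultaneously.
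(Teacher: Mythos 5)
Your proposal is correct, and it splits cleanly into a part that matches the paper and a part that genuinely improves on it. For part (2) you follow essentially the paper's route: project the commutation $[\Ad_{e^{-sX}}V_R,\ad_X V_L]=0$ (the $n=1$ case of the paper's Condition~\ref{Q5}) onto $\tau$, note that only the diagonal terms survive there, separate frequencies, and invoke weak regularity. The only cosmetic difference is parameterization: the paper first isolates a single eigenspace of $\ad_X^2$ by the period argument and writes each diagonal term in amplitude--phase form $A_i\cos(\theta_i-s)\hat{r}_i$ with $A_i=|V_L^i|\cdot|V_R^i|$, so weak regularity kills the single quantity $A_i$, whereas you keep the two Fourier coefficients $P_i,Q_i$ and kill both; these are equivalent since $A_i^2=P_i^2+Q_i^2$, and your identification of the $\tau$-component as $\kappa_i\lb V_L^i,\Ad_{e^{-sX}}V_R^i\rb\hat{r}_i$ with $\kappa_i\neq 0$ checks out (writing $[u,Jv]=\lb u,v\rb\,\mu_i \hat{r}_i$ in each plane $\ml_i$). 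Part (1) is where you diverge, to good effect: the paper proves constancy inside the root decomposition, computing $\lb V_L^\Lambda,\Ad_{e^{-sX}}V_R^\Lambda\rb=\sum_{i\in I'}A_i\cos(\theta_i-s)$ eigenspace-by-eigenspace and observing this vanishes by pairing Equation~\ref{char} with $X$ (Equations~\ref{obama} and~\ref{PK}); you instead work decomposition-free, showing $c(s)=\lb V_L,\Ad_{e^{-sX}}V_R\rb$ satisfies $c''(s)=\lb[\ad_XV_L,\Ad_{e^{-sX}}V_R],X\rb=0$ by bi-invariance of the metric together with condition (1) of Corollary~\ref{LRCOR}, so $c$ is affine and, being bounded, constant. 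Your $c''=0$ is in substance the paper's Equation~\ref{obama} in disguise (it is exactly the $\lb\cdot,X\rb$-component of the $\tau$-projection), but your packaging makes transparent that part (1) requires no weak regularity and no root-space machinery at all, at the modest cost of a slightly weaker byproduct: the paper's argument shows the cross-term vanishes identically within each eigenspace of $\ad_X^2$ (so each eigenspace-projection of $\hat{V}(s)$ has constant length), while yours only pins down the total length.
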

If $X$ has the stronger regularity property that each eigenspace is comprised of a single root space, then notice that the conclusion of part (2) follows from part (1) together with our previous classification of constant length Jacobi fields.
\begin{proof}
As in Lemma~\ref{LRJ}, choose $V_L,V_R\in\mg$ such that $V=V_L+V_R$ and $\mA=\frac 12[X,V_L-V_R]$ and $V_R$ is orthogonal to the null-space of $\ad_X^2$.  To prove part (1) of the proposition, it suffices to verify that the norm of $\hat{V}(s) := V_L+\Ad_{e^{-sX}} V_R$ is independent of $s$, since this is the left-pull-back of one of the Jacobi fields determined by the good triple.

As before, consider a root-space decomposition, $\mg=\tau\oplus_{i\in I}\ml_i$, where $\tau$ is a maximal abelian subalgebra containing $X$, and $I$ is the index set for the 2-dimensional root spaces $\ml_i$.  For $W\in\mg$, let $W^i$ denote the orthogonal projection of $W$ onto $\ml_i$ and let $W^0$ denote denote the projection onto $\tau$.  Part (2) of the proposition is equivalent to the following: if $X$ is weakly regular, then for each index $i$, either $V_L^i=0$ or $V_R^i=0$.

Each eigenspace of $\ad_X^2$ equals a sum of a subset of the root spaces.  Choose a fixed such eigenspace, $\Lambda=\oplus_{i\in I'}\ml_i$, where $I'\subset I$ is the corresponding subset of indices.  For notational convenience, we assume without loss of generality that the corresponding eigvalue equals $1$.  Condition~\ref{Q5} says that for all $n\geq 1$ and all $s\in\R$, we have
$$ \sum_{i\in I}\left[\Ad_{e^{-sX}}V_R^i,\ad^n_{V_L}X\right] = 0$$
Two non-vanishing terms of this sum have the same period if and only if the corresponding two root spaces are grouped into the same eigenspace of $\ad_X^2$.  Terms with different periods cannot cancel each other, so the sum over $I'$ alone vanishes:
\begin{equation}\label{buba} \sum_{i\in I'}\left[\Ad_{e^{-sX}}V_R^i,\ad^n_{V_L}X\right] = 0.\end{equation}

Notice that $\ad_X$ induces an orientation on each $\ml_i\subset\Lambda$.  For each $i\in I'$, let $\theta_i\in[0,2\pi)$ denote the oriented angle from $V_L^i$ to $V_R^i$; that is, the smallest value of $\theta$ such that $\Ad_{e^{\theta X}}V_L^i$ is a positive scalar multiple of $V_R^i$.   Also denote $A_i:=|V_L^i|\cdot|V_R^i|$.  Finally, let $\hat{r}_i\in\tau$ denote the corresponding dual-root; that is, $\hat{r}_i = [W,\ad_X W]$ for any unit-length $W\in\ml_i$.  It is useful to re-define $I'$ by removing all indices $i$ for which $A_i=0$; this way, the angle $\theta_i$ is well-defined for each $i\in I'$.

The $n=1$ case of Equation~\ref{buba}  says:
\begin{equation}\label{char}\sum_{i\in I'}\left[\Ad_{e^{-sX}}V_R^i,\ad_{X}V_L\right] = 0.\end{equation}
In particular, the projection of this sum of brackets onto $\tau$ vanishes, so:
$$0 = \sum_{i\in I'}[\Ad_{e^{-sX}} V_R^i,\ad_X V_L^i] = \sum_{i\in I'}A_i\cos(\theta_i-s)\cdot\hat{r}_i.$$

If $X$ is weakly regular, then the $\hat{r}_i$'s are linearly independent, so each $A_i$ must vanish, which proves part (2) of the proposition.

Part (1) of the proposition is proven by taking the inner product with $X$:
\begin{equation}\label{obama}0=\sum_{i\in I'}A_i\cos(\theta_i-s)\lb\hat{r}_i,X\rb = \sum_{i\in I'}A_i\cos(\theta_i-s).\end{equation}

The condition that  $\hat{V}(s) = V_L + \Ad_{e^{-sX}} V_R$ has \emph{constant} length means that the projection of $\hat{V}(s)$ onto each such eigenspace $\Lambda$ has constant length, which is equivalent to the following being constant in $s$:
\begin{equation}\label{PK}
f(s)  = \lb V_L^\Lambda,\Ad_{e^{-sX}} V_R^\Lambda\rb
      = \sum_{i\in I'} \lb V_L^i,\Ad_{e^{-sX}} V_R^i\rb
       =   \sum_{i\in I'} A_i\cos(\theta_i-s)
\end{equation}
Comparing to Equation~\ref{obama} verifies that $\hat{V}(s)$ has constant length.
\end{proof}
\section{Another example of a good triple in a compact Lie group}
In this section, we exhibit a good triple which demonstrates that the conclusion of Part (2) of Proposition~\ref{new} can be false when $X$ is not weakly regular.  The following is immediate from Corollary~\ref{LRCOR}:
\begin{prop} If $\{X,V_L,V_R\}\subset \mg$ satisfy the following properties:
\begin{enumerate}
\item $V_L$ and $V_R$ lie in a single eigenspace of $\ad_X^2$.
\item $[Ad_{e^{-tX}} V_R,V_L]=0$ for all $t\in\R$.
\end{enumerate}
then $\left\{X,V_L+V_R,\frac 12[X,V_L-V_R]\right\}$ is a good triple.
\end{prop}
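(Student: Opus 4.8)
The plan is to verify the two conditions of Corollary~\ref{LRCOR} for the triple $\left\{X,V_L+V_R,\frac 12[X,V_L-V_R]\right\}$; goodness then follows at once. Before quoting the Corollary one must check that $V_R$ is orthogonal to the null-space of $\ad_X^2$. Since the metric is bi-invariant, $\ad_X$ is skew-symmetric, so $\ad_X^2$ is symmetric and its eigenspaces for distinct eigenvalues are mutually orthogonal; hence if the common eigenspace of $V_L$ and $V_R$ in hypothesis~(1) of the Proposition is a \emph{nonzero} eigenspace, then $V_R$ is orthogonal to $\mU_0$ and the Corollary is applicable. (If that common eigenspace is $\mU_0$ itself, then $[X,V_L]=[X,V_R]=0$, so $B=\overline B=0$ in the notation of Theorem~\ref{classify} and the triple is good trivially.)

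Condition~(2) of Corollary~\ref{LRCOR} requires no work: its inner bracket $\left[\Ad_{e^{-tX}}V_R,V_L\right]$ is exactly the expression that vanishes by hypothesis~(2) of the Proposition, so $\left[\ad_{V_L}^m X,\left[\Ad_{e^{-tX}}V_R,V_L\right]\right]=0$ for all $m\geq 1$ and all $t\in\R$.

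The single computational step is condition~(1) of the Corollary, namely $\left[\Ad_{e^{-tX}}V_R,\ad_X V_L\right]=0$, which I would deduce from hypothesis~(2). Applying the automorphism $\Ad_{e^{tX}}$ to the identity $\left[\Ad_{e^{-tX}}V_R,V_L\right]=0$ and using $\Ad_{e^{tX}}X=X$ rewrites it as $\left[V_R,\Ad_{e^{tX}}V_L\right]=0$ for all $t$; differentiating in $t$ yields $\left[V_R,\Ad_{e^{tX}}\ad_X V_L\right]=0$, and applying $\Ad_{e^{-tX}}$ returns $\left[\Ad_{e^{-tX}}V_R,\ad_X V_L\right]=0$. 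With both conditions verified, Corollary~\ref{LRCOR} gives that the triple is good.

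I expect the differentiation above to be the only genuine content, and even it is routine; the one point to watch is keeping the conjugations straight (equivalently, one may differentiate hypothesis~(2) directly and tidy up with the Jacobi identity). Hypothesis~(1) plays no role beyond supplying the orthogonality of $V_R$ to $\mU_0$ needed to invoke the Corollary.
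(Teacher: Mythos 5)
Your proposal is correct and takes essentially the same route as the paper, which derives this proposition as ``immediate from Corollary~\ref{LRCOR}'': you verify exactly its two conditions, and your derivation of condition~(1) from hypothesis~(2) (conjugate by $\Ad_{e^{tX}}$, differentiate in $t$, conjugate back) is the intended one-line deduction. Your added care in checking applicability of the Corollary --- noting that hypothesis~(1) supplies the orthogonality of $V_R$ to the null-space of $\ad_X^2$, and disposing of the degenerate case $V_L,V_R\in\mU_0$ via $B=\overline{B}=0$ in Theorem~\ref{classify} --- correctly fills in details the paper leaves implicit.
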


To describe our example, let $$G=SU(4) \text{ and }X=\text{diag}(-\mathbf{i},0,\mathbf{i},0)\in\mg=su(4).$$  Notice that $+1$ is an eigenvalue of $\ad_X^2$ whose corresponding eigenspace is comprised of four root spaces, commonly denoted as $\ml_{12},\ml_{23},\ml_{34},\ml_{14}$.  Let $\{E_{ij},F_{ij}\}\subset\mg$ denote the standard basis of $\ml_{ij}$; that is, $E_{ij}$ has $1$ in position $(i,j)$ and $-1$ in position $(j,i)$, while $F_{ij}$ has $\mathbf{i}$ in positions $(i,j)$ and $(j,i)$.  Define
$$V_L = E_{12} + E_{23} + E_{34} + E_{14} \text{ and } V_R = F_{12}+F_{23}+F_{34} -F_{14}.$$
We claim that $[Ad_{e^{-tX}} V_R,V_L]=0$ for all $t\in\R$, and therefore that the triple $\left\{X,V_L+V_R,\frac 12[X,V_L-V_R]\right\}$ is good.  It is sufficient to verify this at $t=0$ and $t=\pi/2$, which is a straightforward calculation.

In this example, there are root spaces onto which $V_L$ and $V_R$ both have non-zero projections, which is surprising.  In particular, this example does not satisfy the conclusion of Part (2) of Proposition~\ref{new}.

\section{Good triples in locally symmetric spaces}
In this section, we prove Proposition~\ref{goodlocsym}, which provides a necessary condition for a triple to be good in any locally symmetric space.
\begin{proof}[Proof of Proposition~\ref{goodlocsym}]
 Consider the surface $f(s,t)=\exp(t\cdot V(s)) = \exp(s\cdot X(t))$ induced by a good triple $\{X,V,\mA\}\subset T_pM$ as in Definition~\ref{D:good} and introduce the following notations: $X_{s,t}:=f_{\ast(s,t)}\frac{\partial}{\partial s}=\frac{\partial f}{\partial s}(s,t),V_{s,t}:=f_{\ast(s,t)}\frac{\partial}{\partial t}=\frac{\partial f}{\partial t}(s,t),$ and $A_{s,t}:=\nabla_{\frac{\partial}{\partial t}}X_{s,t}.$ (Notice that $X_{0,t}=X(t)$ and $V_{s,0}=V(s)$.)

 The formulas below are well known to hold for general parametrized surfaces:
 \begin{equation}\label{e1} \nabla_{\frac{\partial}{\partial t}}X_{s,t}=\nabla_{\frac{\partial}{\partial s}}V_{s,t}
 \end{equation}
 \begin{equation}\label{e2} \nabla_{\frac{\partial}{\partial s}}\nabla_{\frac{\partial}{\partial t}}Y_{s,t}-
 \nabla_{\frac{\partial}{\partial t}}\nabla_{\frac{\partial}{\partial s}}Y_{s,t}=R\left(X_{s,t},V_{s,t}\right)Y_{s,t},
 \end{equation}
 for any vector field $Y$ along $f.$

 Using ~\ref{e2} with $Y_{s,t}=X_{s,t},$ we obtain
  $$\nabla_{\frac{\partial}{\partial s}}\nabla_{\frac{\partial}{\partial t}}X_{s,t}-
 \nabla_{\frac{\partial}{\partial t}}\nabla_{\frac{\partial}{\partial s}}X_{s,t}=R\left(X_{s,t},V_{s,t}\right)X_{s,t}$$
  But $s\rightarrow f(s,t)$ is a geodesic, so  $\nabla_{\frac{\partial}{\partial s}}X_{s,t}=\nabla_{\frac{\partial}{\partial s}}\frac{\partial f}{\partial s}=0.$ This way,
  \begin{equation}\label{e3} \nabla_{\frac{\partial}{\partial s}}\nabla_{\frac{\partial}{\partial t}}X_{s,t}=R\left(X_{s,t},V_{s,t}\right)X_{s,t}.
  \end{equation}
  Similarly,
 \begin{equation}\label{e4} -\nabla_{\frac{\partial}{\partial t}}\nabla_{\frac{\partial}{\partial s}}V_{s,t}=R\left(X_{s,t},V_{s,t}\right)V_{s,t},
  \end{equation}
by using ~\ref{e2} with $Y_{s,t}=V_{s,t}.$

 Applying $\nabla_{\frac{\partial}{\partial t}}$ in ~\ref{e3}, $\nabla_{\frac{\partial}{\partial s}}$ in ~\ref{e4}, and adding the two relations yields
 \begin{equation}\label{e5}\nabla_{\frac{\partial}{\partial t}}\nabla_{\frac{\partial}{\partial s}}\nabla_{\frac{\partial}{\partial t}}X_{s,t}-\nabla_{\frac{\partial}{\partial s}}\nabla_{\frac{\partial}{\partial t}}\nabla_{\frac{\partial}{\partial s}}V_{s,t}=\nabla_{\frac{\partial}{\partial t}}R\left(X_{s,t},V_{s,t}\right)X_{s,t}+\nabla_{\frac{\partial}{\partial s}}R\left(X_{s,t},V_{s,t}\right)V_{s,t}.
 \end{equation}
 Observe that, since the manifold is locally symmetric,
\begin{eqnarray*}\nabla_{\frac{\partial}{\partial t}}R\left(X_{s,t},V_{s,t}\right)X_{s,t} & = & R\left(\nabla_{\frac{\partial}{\partial t}}X_{s,t},V_{s,t}\right)X_{s,t}\\
 & & +R\left(X_{s,t},\nabla_{\frac{\partial}{\partial t}}V_{s,t}\right)X_{s,t}+R\left(X_{s,t},V_{s,t}\right)\nabla_{\frac{\partial}{\partial t}}X_{s,t}.
 \end{eqnarray*}
 As the second term on the right side of the equality above is zero due to the fact that $t\rightarrow f(s,t)$ is a geodesic, we have
 \begin{equation}\label{e6}\nabla_{\frac{\partial}{\partial t}}R\left(X_{s,t},V_{s,t}\right)X_{s,t}=R\left(\nabla_{\frac{\partial}{\partial t}}X_{s,t},V_{s,t}\right)X_{s,t}+R\left(X_{s,t},V_{s,t}\right)A_{s,t}.
 \end{equation}
 Similarly, based on the same type of computations as above and relation ~\ref{e1},
 \begin{equation}\label{e7}\nabla_{\frac{\partial}{\partial s}}R\left(X_{s,t},V_{s,t}\right)V_{s,t}=R\left(X_{s,t},\nabla_{\frac{\partial}{\partial s}}V_{s,t}\right)X_{s,t}+R\left(X_{s,t},V_{s,t}\right)A_{s,t}.
 \end{equation}
 \noindent
 By adding relations ~\ref{e6} and ~\ref{e7}, it follows that the term on the right side of relation ~\ref{e5} is
\begin{eqnarray*}\nabla_{\frac{\partial}{\partial t}}R\left(X_{s,t},V_{s,t}\right)X_{s,t}+\nabla_{\frac{\partial}{\partial s}}R\left(X_{s,t},V_{s,t}\right)V_{s,t} &=& R(X_{s,t},A_{s,t})V_{s,t}+R(A_{s,t},V_{s,t})X_{s,t}\\ & & +2R(X_{s,t},V_{s,t})A_{s,t}.
\end{eqnarray*}
 On the other hand, by using relation ~\ref{e2} with $Y_{s,t}=\nabla_{\frac{\partial}{\partial t}}X_{s,t}=\nabla_{\frac{\partial}{\partial s}}V_{s,t}=A_{s,t},$ the term on the left side of relation ~\ref{e5} can be written as
 $$\nabla_{\frac{\partial}{\partial t}}\nabla_{\frac{\partial}{\partial s}}\nabla_{\frac{\partial}{\partial t}}X_{s,t}-\nabla_{\frac{\partial}{\partial s}}\nabla_{\frac{\partial}{\partial t}}\nabla_{\frac{\partial}{\partial s}}V_{s,t}=R(X_{s,t},V_{s,t})A_{s,t}.$$
 Based on the two formulas above, relation ~\ref{e5} becomes
 \begin{equation}\label{e8} R(X_{s,t},A_{s,t})V_{s,t}+R(A_{s,t},V_{s,t})X_{s,t}+R(X_{s,t},V_{s,t})A_{s,t}=0.
 \end{equation}
 But, by the first Bianchi identity, we have
 $$R(X_{s,t},V_{s,t})A_{s,t}+R(V_{s,t},A_{s,t})X_{s,t}+R(A_{s,t},X_{s,t})V_{s,t}=0.$$
 So,
 \begin{equation}\label{e9} R(A_{s,t},V_{s,t})X_{s,t}+R(X_{s,t},A_{s,t})V_{s,t}=R(X_{s,t},V_{s,t})A_{s,t}.
 \end{equation}
 Combining relations ~\ref{e8} and ~\ref{e9}, we obtain
\begin{equation}\label{e10}R(X_{s,t},V_{s,t})A_{s,t}=0.
\end{equation}
When $t=s=0,$ relation ~\ref{e10} becomes $R(X,V)\mA=0,$ as claimed.
\end{proof}
\bibliographystyle{amsplain}

\end{document}